\newtheorem{theorem}{Theorem}[section]
\newtheorem{lemma}[theorem]{Lemma}
\newtheorem{assumption}[theorem]{Assumption}
\DeclareMathOperator{\ReLU}{ReLU}
\DeclareMathOperator*{\argmin}{arg\,min}
\xapptocmd{\NAT@bibsetnum}{\setlength{\leftmargin}{0pt}\setlength{\itemindent}{\labelwidth}\addtolength{\itemindent}{\labelsep}}{}{}
\title{Learning based convex approximation for constrained parametric optimization}
\author{%
  Kang Liu$^{\#}$ \\
  School of Future Technology\\
  Xi'an Jiaotong University\\
  Xi'an, 710049 \\
  \texttt{liukang@stu.xjtu.edu.cn} \\
  \And
  Wei Peng$^{\#}$ \\
  School of Future Technology\\
  Xi'an Jiaotong University\\
  Xi'an, 710049 \\
  \texttt{pengwei\_xjtu@163.com} \\
  \And
  Jianchen Hu\thanks{Corresponding Author} \\
  School of Automation Science and Technology\\
  Xi'an Jiaotong University\\
  Xi'an, 710049 \\
  \texttt{horace89@xjtu.edu.cn} \\
}
\begin{document}

\maketitle

\begin{abstract}
We propose an input convex neural network (ICNN)-based self-supervised learning framework to solve continuous constrained optimization problems. By integrating the augmented Lagrangian method (ALM) with the constraint correction mechanism, our framework ensures \emph{non-strict constraint feasibility}, \emph{better optimality gap}, and \emph{best convergence rate} with respect to the state-of-the-art learning-based methods. We provide a rigorous convergence analysis, showing that the algorithm converges to a Karush-Kuhn-Tucker (KKT) point of the original problem even when the internal solver is a neural network, and the approximation error is bounded. We test our approach on a range of benchmark tasks including quadratic programming (QP), nonconvex programming, and large-scale AC optimal power flow problems. The results demonstrate that compared to existing solvers (e.g., \texttt{OSQP}, \texttt{IPOPT}) and the latest learning-based methods (e.g., DC3, PDL), our approach achieves a superior balance among accuracy, feasibility, and computational efficiency.
\end{abstract}

\section{Introduction}
\label{sec:introduction}
Optimization problems widely arise in diverse domains, including the energy systems, logistics, healthcare, and many others. Based on their structural properties, these problems can be categorized as convex or nonconvex, and constrained or unconstrained~\cite{boyd2004convex}. In practice, many real-world optimization tasks are large-scale, nonconvex, and constrained. Typically, such problems lack the closed-form analytical solutions and cannot be solved efficiently by polynomial-time algorithms~\cite{nocedal2006numerical}. Consequently, the design of efficient numerical solvers remains a crucial research topic in optimization~\cite{bertsekas1999nonlinear,gurobi2024state}.

Current optimization solvers have made significant progress, enabling effective solutions in various application areas. The commercial closed-source solvers, such as \texttt{COPT}~\cite{copt} for linear optimization and \texttt{Gurobi}~\cite{gurobi} for mixed-integer and conic optimization, offer high accuracy and broad applicability. In open-source communities, \texttt{SCIP}~\cite{scip} excels in integer programming, \texttt{OSQP}~\cite{osqp} effectively solves convex quadratic problems, and \texttt{IPOPT}~\cite{ipopt} handles nonlinear optimization tasks. For power flow-related problems, tools such as \texttt{MATPOWER} and \texttt{PYPOWER}~\cite{matpower,pypower} are commonly employed. These solvers, collectively referred to as numerical iterative methods, perform well on problems with clear algebraic structures and moderate computational constraints.

Nevertheless, the traditional numerical iterative solvers inherently rely on the sequential computation, which poses a significant obstacle to parallel implementation~\cite{lu2023cupdlp}. Generally, an unconstrained optimization iteration adopts the following form~\cite{nocedal2006numerical}:
\begin{equation}
x^{k+1} = x^k + \alpha^k p^k,
\end{equation}
where $x^k$ denotes the current solution iterate, $\alpha^k$ is the step size selected by line search strategies (e.g., golden section or Armijo rule~\cite{armijo1966minimization}), and $p^k$ represents a descent direction (e.g., gradient or Newton-like direction). For constrained optimization problems, methods like penalty-based reformulation or Lagrangian dualization are commonly employed to transform constrained problems into unconstrained forms~\cite{nocedal2006numerical}.

Parallelization efforts in optimization mainly fall into two categories: model-level decomposition and gradient-level parallelism. The model-level decomposition (such as Alternating Direction Method of Multipliers (ADMM)~\cite{boyd2011distributed} and logic-based Benders decomposition~\cite{hooker2003logic}) partitions the original problem into structured subproblems. However, these methods require specific structural properties such as variable decoupling, sparsity, or diagonal dominance, thus restricting their application range. On the other hand, the gradient-level parallelization leverages independent components of decision variables processed simultaneously, as observed in mini-batch stochastic gradient descent~\cite{qian2020impact} and block coordinate descent~\cite{nutini2017let}. Even though the computational efficiency of each iteration can be improved, the total iteration numbers are not reduced and the convergence instability can be encountered for the gradient-level approaches.

Recently, deep learning methods have garnered increased attention due to their rapid inference speed and inherent parallelism. Among learning-based approaches, supervised learning represents a straightforward strategy for optimization~\cite{Fioretto2020Predicting,Pan2020DeepOPF,He2023Fast}. This method generates datasets comprising pairs of parameterized problem instances $\theta_i$ and their optimal solutions $x_i^*$, then train the neural networks to directly predict solutions by minimizing the prediction loss (e.g., $L = \|x_i^* - x_i'\|_2$). Although the neural network inference is computationally efficient, these predicted solutions do not inherently satisfy constraints, which limits their practical applicability in constrained optimization.

In order to recover the constraint feasibility, hybrid \emph{predict-and-correct} approaches including DC3~\cite{donti2021dc3} and PDL~\cite{park2022pdl} have appeared. These approaches leverage the self-supervised learning to obtain an initial prediction and subsequently correct it through iterative correction based on the gradient of the constraint violations, or Karush–Kuhn–Tucker (KKT) conditions. The efficacy of these methods heavily relies on the quality of the initial guess---a poor initial prediction necessitates more extensive corrections, potentially slowing or obstructing convergence.

The correction procedure usually follows either the gradient-based or neural network-based paradigms~\cite{luken2024self}. The gradient-based correction is inherently non-parallelizable. Conversely, the neural network-based correction is parallelizable but includes great memory consumption and may produce infeasible solution. Thus, the neural network-based correction is scalable and suitable for large-scale nonconvex problems, e.g., the neural prediction of descent directions~\cite{garrigos2023adapting}, differentiable optimization via end-to-end backpropagation~\cite{pan2024bpqp}, reinforcement learning-driven branching strategies in branch-and-bound algorithms~\cite{gasse2019reinforcement}, and graph representation learning methods tailored to combinatorial optimization~\cite{bengio2023solving}. Several such methodologies have already been integrated into state-of-the-art solvers, including \texttt{Gurobi}.

In this work, we propose a novel self-supervised learning framework based on Input Convex Neural Networks (ICNN) for solving general continuous optimization problems. ICNN can achieve a convex mapping with positive weight and ``passthrough'' channel, thus beneficial for the convergence and stability. By incorporating ICNN-based correction into the augmented Lagrangian framework, we achieve significant improvements in parallelizability and scalability, while maintaining strong guarantees on solution feasibility and convergence. Our contributions are, 

i) We propose an ICNN-based self-supervised learning framework which includes: 1) \emph{non-strict constraint feasibility}; 2) \emph{better optimality gap}; 3) \emph{fastest convergence rate} with respect to current learning-based methods in terms of solving the constrained optimization problem.

ii) We provide a rigorous theoretical analysis, proving that the augmented Lagrangian multiplier (ALM) with neural network proxy solver can converge to a KKT point and the approximation error is bounded.

The remainder of this paper is structured as follows. Section~\ref{sec:related work} reviews related literatures. Section~\ref{sec:preliminary} is the preliminary.
Section~\ref{sec:method} describes our proposed approach.
Section~\ref{sec:analysis} provides the theoretical analysis.
Section~\ref{sec:experiments} presents empirical evaluations and extensive comparisons with existing approaches, and Section~\ref{sec:conclusion} concludes the paper.

\section{Related Work}\label{sec:related work}
In the constrained optimization, the Lagrangian multiplier method~\cite{powell1969method, hestenes1969multiplier} is one of the fundamental approaches which can transform the constrained problems into unconstrained ones by incorporating the Lagrangian function. In order to recover the strong convex property and improve the convergence, the ALM method is provided by integrating penalty functions \cite{alm}. 

In addition to the classical optimization methods, the machine learning techniques have made significant progress in the field of optimization problems approximation and acceleration, forming two technical paradigms~\cite{BENGIO2021405}. The first category is learning-to-optimize, which embeds machine learning into optimization models to improve the heuristic strategies or branch-and-bound rules~\cite{milp, search}. The second category is optimization proxy, which constructs end-to-end machine learning models to directly map problem parameters to complete solutions~\cite{graphs, fast}. The reinforcement learning is suitable for the first category, applicable to optimization node selection and learning local search rules, making it more appropriate for learnable optimization tasks~\cite{reinforcement}. The supervised learning is suitable for the second category, particularly superior for solving black-box optimization problems~\cite{article}. However, the supervised learning relies on pre-solved instance pairs from historical data, which is unrealistic to obtain since there may not exist a well-tuned exact solver for the problem. The self-supervised learning method is developed to overcome this limitation, e.g., DC3~\cite{donti2021dc3} and PDL~\cite{park2022pdl} methods use the optimization parameters and autmented Lagrangian function to train the neural network directly.

For constrained optimization problems, effectively enforcing constraints in neural networks has become a key research focus. The main technical approaches include: 1) adding the constraint violation penalty~\cite{dener2020training}; 2) alternating the optimizer during training~\cite{ma}; 3) designing new neural network architectures to ensure the constraint feasibility (such as DeepSaDe~\cite{Goyal2023DeepSaDeLN} and ConstraintNet~\cite{Brosowsky2020SampleSpecificOC}). Different from the above works, we first propose an error-bounded self-supervised learning based approach for constrained optimization. We employ ALM-based loss function to learn the mapping from parameter $x$ to optimal solution $y^*(x)$. Throughout training, we exploit structural information inherent to the problem, to guide network parameter updates, thereby approximating the set-valued solution mapping $S(x)$. 

\section{Preliminary}\label{sec:preliminary}

\subsection{The Constrained Optimization Problem}

We consider the following general form of continuous constrained optimization problem:
\begin{equation}\label{eq:general_opt}
\begin{aligned}
\min_{y\in \mathcal{Y}} ~f_x(y),\quad  
\text{s.t.} \quad  g_x(y)\leq 0, \quad h_x(y)=0.
\end{aligned}
\end{equation}
where $x\in \mathbb{R}^d$ denotes problem-specific parameters defining a particular instance of the optimization task, $y\in \mathbb{R}^n$ represents the solution of this instance, and $\mathcal{Y}$ is the feasible region. $f_x(y)$, $g_x(y)$, and $h_x(y)$ denote the objective function, inequality constraints, and equality constraints, respectively.

\subsection{Parametric Optimization and Set-Valued Mappings}
When the research interest focuses on the variations with respect to the problem parameter $x$, we encounter the parametric optimization formulation. The optimal value function is defined as
\begin{equation}\label{eq:opt_val_func}
v(x)=\min_{y\in Y(x)} f(x,y),
\end{equation}
where the feasible set $Y(x)$ for given parameter $x$ is given by
\begin{equation}\label{eq:feasible_set}
Y(x)=\{y\in \mathcal{Y}\mid g_x(y)\leq 0,\, h_x(y)=0\}.
\end{equation}

Correspondingly, the optimal solution mapping (set-valued mapping) is defined by
\begin{equation}\label{eq:solution_mapping}
S(x)=\{y\in Y(x)\mid f(x,y)=v(x)\}.
\end{equation}
\subsection{Self-Supervised Learning for Optimization}

Conventional optimization solvers require significant computational resources, particularly in large-scale problems. Self-supervised learning leverages the intrinsic structures of the optimization problem to generate supervisory signals without explicit manual labeling.

\section{Methodology}\label{sec:method}

This section elaborates the proposed methodology, encompassing the ALM construction, the ICNN, as well as method for correcting feasible solutions.

\subsection{The ALM Construction}
We design a neural network architecture, referred to as Augmented Lagrangian Network (ALN), to emulate the ALM procedure. The ALN is trained via the augmented Lagrangian loss function, which is defined as follows,
    \begin{equation}\label{eq:alm_loss}
    \small
    \begin{aligned}
    L_p\left(y\mid \mu,\lambda\right)&=f_x(y)+\mu^T \operatorname{ReLU}(g_x(y))+\lambda^T h_x(y)+\frac{\rho}{2}\left(\sum_{j\in G}v(g_{x,j}(y))+\sum_{j\in H}v(h_{x,j}(y))\right),
    \end{aligned}
    \end{equation}
where $\operatorname{ReLU}(z)=\max(z,0)$, which handles the inequality constraints $g_x(y)$. Specifically, $\operatorname{ReLU}(g_x(y))$ equals zero for $g_x(y)\le 0$, and returns positive values when constraints are violated, thus applying penalties solely on constraint violations. Here, $\mu$ and $\lambda$ are the dual multipliers; $\rho$ denotes the penalty coefficient; sets $G$ and $H$ denote violated inequality and equality constraints.

The penalty functions are defined as:
    \begin{equation}\label{eq:eqieq_penalty}
    v(g(y))=(\operatorname{ReLU}(g(y)))^2,\quad
    v(h(y))=(h(y))^2.
    \end{equation}
These quadratic penalty terms significantly amplify the penalty for severe constraint violations. By this construction, the penalties vanish whenever constraints are satisfied ($g_x(y)\leq 0$ and $h_x(y)=0$), while they increase quadratically with the increase of constraint violation. 

In the ALM procedure, each iteration comprises three key steps: (i) minimization of the augmented Lagrangian function, (ii) updating dual multipliers, and (iii) adjusting the penalty parameter. Step (i) corresponds to the inner iteration, implemented by gradient descent and backpropagation through the ALN model to approximate optimal solutions for given parameters $x$; Step (ii) and (iii) constitute the outer iteration, with the core objective of gradually updating dual variables and penalty coefficients to enhance constraint satisfaction and optimization stability.

Let $y=\Pi_\theta(x)$ denote the ALN output at iteration $k$, with corresponding dual variables $\mu_k$ and $\lambda_k$. The outer iteration update rules are given by
\begin{equation}\label{eq:lambda,mu_update}
    \mu_{k+1}\leftarrow\max\left\{\mu_k+\rho\max\left(g_x(y),0\right),\,0\right\},\quad\lambda_{k+1}\leftarrow\lambda_k+\rho h_x(y).
\end{equation}
Let us consider dual variable estimates, an auxiliary measure is defined as \cite{luken2024self}
    \begin{equation}\label{eq:max_ieq_vio}
    \sigma_{x,j}(y)=\max\left\{g_{x,j}(y),-\frac{\mu_{k,j}}{\rho}\right\},\quad\forall j\in G.
    \end{equation}
Consequently, the maximal violation degree $\nu_k$ at iteration $k$ is defined as
    \begin{equation}\label{eq:max_constr_vio}
    \nu_k=\max\left\{\|h_x(y)\|_{\infty},\,\|\sigma_x(y)\|_{\infty}\right\}.
    \end{equation}
As constraint violation gradually decreases, penalty terms reduce accordingly. To maintain convergence stability and ensure a stronger penalty when constraints continue being violated, ALN dynamically adjusts the penalty coefficient $\rho$. Specifically, the updating rule for $\rho$ when $k>1$ and $\nu_k>\tau \nu_{k-1}$ is
    \begin{equation}\label{eq:rho_update}
    \rho\leftarrow\min\{\alpha\rho,\rho_{\max}\},
    \end{equation}   
where $\tau\in(0,1)$ represents a tolerance parameter, $\alpha>1$ denotes a growth factor, and $\rho_{\max}$ is the maximum penalty value to prevent numerical instabilities.
Combining these steps, we get the Algorithm\ref{alg:ALM}.

\begin{small}
    \begin{algorithm}[htbp]
   \caption{ALN Training Procedure}
   \label{alg:ALM}
\begin{algorithmic}[1]
   \STATE {\bfseries Input:} Initial penalty parameter $\rho>0$, growth factor $\alpha>1$, tolerance factor $\tau\in(0,1)$, penalty upper bound $\rho_{\max}$, initial dual multipliers $\mu_0\geq 0$, $\lambda_0$, neural network model $\Pi_{\theta}(x)$, maximum iterations $K$, dataset $\mathcal{D}=\{x_i\}_{i=1}^{N}$.
   \STATE {\bfseries Output:} Trained network parameters $\theta$.
   \FOR{$k=1,2,\dots,K$}
       \FOR{each training instance $x_i\in\mathcal{D}$}
           \STATE $y_i\leftarrow\Pi_{\theta}(x_i)$
           \STATE Compute constraint violations using \eqref{eq:max_ieq_vio} and \eqref{eq:max_constr_vio}
           \STATE Compute ALM loss $L_p(y_i|\mu_{k-1,i},\lambda_{k-1,i})$ using \eqref{eq:alm_loss}
           \STATE Update neural network parameters $\theta$ by gradient descent on $L_p$
           \STATE Update dual multipliers using \eqref{eq:lambda,mu_update}
       \ENDFOR
       \STATE Evaluate maximal violation using \eqref{eq:max_constr_vio}
       \IF{$k>1$ and $\nu_k>\tau \nu_{k-1}$}
           \STATE update penalty coefficient using \eqref{eq:rho_update}
       \ENDIF
       \IF{converged}
           \STATE {\bfseries break}
       \ENDIF
   \ENDFOR
   \STATE \textbf{Correct} the solution $y$ via constraint satisfaction correction procedure given in Section \ref{sec:constraint_correction}.
\end{algorithmic}
\end{algorithm}
\end{small}

\textit{Note on KKT Convergence Check:} Convergence could alternatively be assessed via the KKT residual $R_{KKT}^{(k)} \le \epsilon_{KKT}$, where $\epsilon_{KKT} > 0$ is a tolerance. The residual combines primal feasibility ($\nu_k$ from Eq.~\eqref{eq:max_constr_vio}) and stationarity (Lagrangian gradient norm):
\begin{equation*}
R_{KKT}^{(k)} := \max \left\{ \nu_k, \, \|\nabla_y \mathcal{L}(y^k, \mu^k, \lambda^k)\|_{\infty} \right\}
\end{equation*}
Here, $\mathcal{L}(y, \mu, \lambda) = f_x(y) + \mu^T g_x(y) + \lambda^T h_x(y)$ is the standard Lagrangian. (Algorithm~\ref{alg:ALM} uses validation-based early stopping).

\subsection{ICNN Principle}
In the previous section, we illustrate the parameter update rules in the outer iteration. The efficiency of the inner iteration heavily depends on the neural network design. Instead of the multi-layer perceptron (MLP), we adopt ICNN. 

\subsubsection{Network Structure and Construction}

The primary distinction of ICNN from MLP is that it is a convex mapping, guaranteed by Lemma~\ref{lemma:convex-composition}.

\begin{lemma}\label{lemma:convex-composition}\cite{amos2017input}
Let $h(x)$ be convex and non-decreasing, and $u(y)$ be convex. Then, the composite function $h(u(y))$ is convex.
\end{lemma}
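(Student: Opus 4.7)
The plan is to prove the lemma directly from the definition of convexity, chaining the three hypotheses (convexity of $u$, monotonicity of $h$, and convexity of $h$) in that order. This is the textbook argument for composition rules in convex analysis, so I would not appeal to any subdifferential or second-order characterization, since the hypotheses do not even require differentiability.

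First, I would fix arbitrary points $y_1, y_2$ in the domain of $u$ and a scalar $\lambda \in [0,1]$, and set the goal of showing
\begin{equation*}
(h \circ u)(\lambda y_1 + (1-\lambda) y_2) \le \lambda (h\circ u)(y_1) + (1-\lambda)(h\circ u)(y_2).
\end{equation*}
Next, I would apply convexity of $u$ to obtain $u(\lambda y_1 + (1-\lambda) y_2) \le \lambda u(y_1) + (1-\lambda) u(y_2)$. Then I would invoke the non-decreasing property of $h$ to preserve the inequality after applying $h$ to both sides, yielding $h(u(\lambda y_1 + (1-\lambda) y_2)) \le h(\lambda u(y_1) + (1-\lambda) u(y_2))$. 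Finally, I would apply convexity of $h$ at the scalar arguments $u(y_1), u(y_2)$ to bound the right-hand side by $\lambda h(u(y_1)) + (1-\lambda) h(u(y_2))$. Concatenating the two inequalities gives the desired convexity of $h \circ u$.

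There is no genuine obstacle here; the only subtle point is the necessity of the monotonicity hypothesis, which is what allows step two to preserve the direction of the inequality after composition with $h$. I would briefly remark that without monotonicity the chain breaks (e.g.\ $h(t) = -t$ with $u$ convex would produce a concave composite), to motivate why the ICNN construction in Section~\ref{sec:method} enforces monotone non-decreasing activations together with non-negative weights on the hidden-to-hidden connections.
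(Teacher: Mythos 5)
Your proof is correct and is exactly the standard composition argument; the paper itself gives no proof of this lemma, deferring to the cited reference \cite{amos2017input}, which uses the same chain of inequalities (convexity of $u$, then monotonicity of $h$, then convexity of $h$). Your remark on why monotonicity is indispensable, and its connection to the non-negative hidden-to-hidden weights $W_i^{(z)}$ in the ICNN construction, is a worthwhile addition that the paper only states implicitly.
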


Consider a $k$-layer ICNN, with input $y$ and output $z_k$. Denote the network's parameters (weights and biases) by $\theta=\{W^{(y)}_{0:k-1},W^{(z)}_{1:k-1},b_{0:k-1}\}$. The activation at layer $i+1$ is given by:
    \begin{equation}\label{eq:icnn_layer}
    z_{i+1} = 
    \begin{cases}
    g_i\left(W_i^{(z)} z_i + W_i^{(y)} y + b_i\right), & i=0,1,\dots,k-2, \\[6pt]
    f(y;\theta), & i=k-1.
    \end{cases}
\end{equation}
The ICNN structure is illustrated in Figure~\ref{fig:icnn_structure}.
\begin{figure}[htbp]
  \centering
  \includegraphics[width=0.6\linewidth]{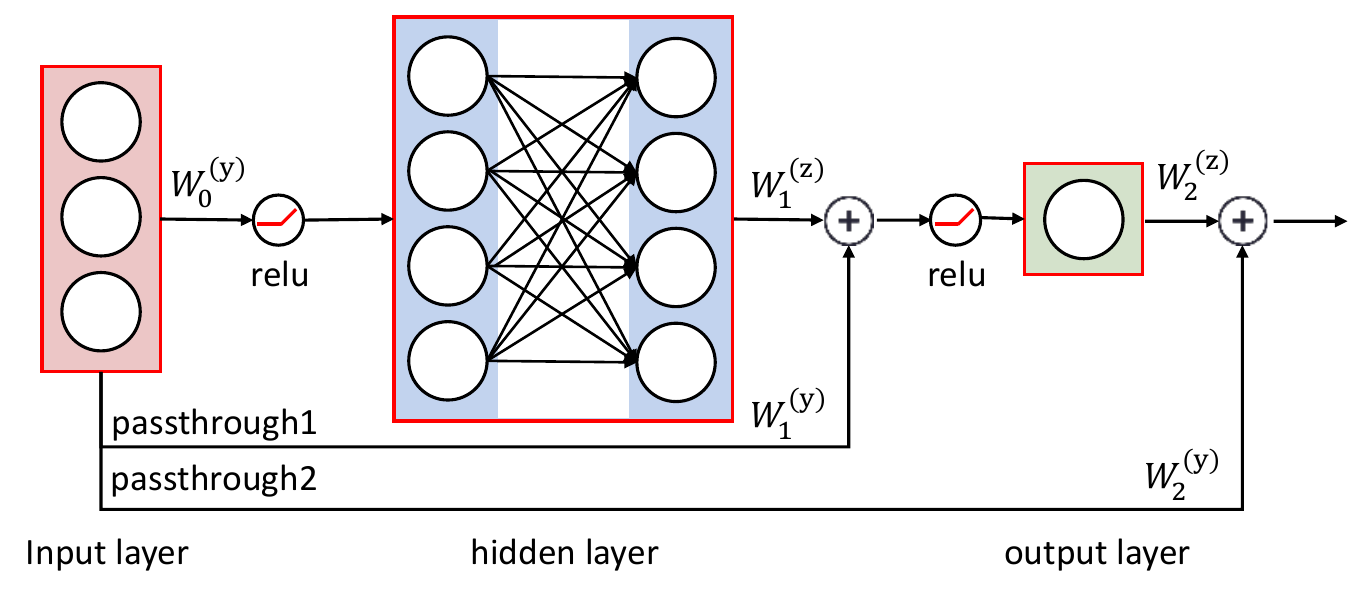}
  \caption{Illustration of ICNN architecture.}
  \label{fig:icnn_structure}
\end{figure}
Additionally, direct connections from the input to hidden layers, termed ``passthrough'' connections, are not constrained by non-negativity and may have arbitrary weights and biases. These passthrough connections significantly enhance network expressiveness. Compared to standard MLPs, ICNNs are less susceptible to local minima during backpropagation, resulting in faster and more stable convergence, requiring fewer iterations to reach high-quality solutions. The proof of $f(y;\theta)$'s convexity is provided in~\cite{amos2017input}.

\subsection{Constraint Satisfaction Correction}
\label{sec:constraint_correction}

In order to enhance the feasibility of solutions generated by a primary method (e.g., a neural network prediction), we introduce an iterative correction procedure. The motivation is to systematically reduce constraint violations by taking steps guided by the gradients of infeasibility measures. This process iteratively projects or moves an initial solution estimate $y^{(0)}$ closer to the feasible set. We outline two approaches tailored to different structures of the equality constraints.

\subsubsection{Correction with Explicit Equality Constraint Parametrization}
\label{sec:correction_implicit_balanced}

This approach is employed when the equality constraints $h(x, y) = 0$ can be resolved by defining $q$ dependent variables $w \in \mathbb{R}^q$ as an explicit or implicit function of $n-q$ independent variables $z \in \mathbb{R}^{n-q}$, denoted $w = \varphi_x(z)$. The existence and differentiability of $\varphi_x$ are often underpinned by the Implicit Function Theorem, applicable when the Jacobian $\nabla_w h$ is non-singular. This technique, also observed in methods like~\cite{donti2021dc3}, ensures that any point $y = (z, \varphi_x(z))$ inherently satisfies $h(x, y) = 0$.

With equality constraints handled by the parametrization, correction efforts concentrate on satisfying the inequality constraints $g(x, y) \leq 0$. We achieve this by performing gradient descent on a measure of inequality violation, defined in terms of the independent variables $z$ as
    \begin{equation}
    V_g(z; x) = \frac{1}{2} \|\ReLU(g(x, (z, \varphi_x(z))))\|_2^2.
    \end{equation}
The update step adjusts $z$ in the negative gradient direction of this violation:
\begin{equation}
z^{(k+1)} = z^{(k)} - \gamma \nabla_z V_g(z^{(k)}; x),
\end{equation}   
where $\gamma > 0$ is a step size. Computing $\nabla_z V_g$ involves the chain rule, requiring gradients of $g$ and the Jacobian of $\varphi_x$. The dependent variables are subsequently updated using $w^{(k+1)} = \varphi_x(z^{(k+1)})$ to ensure the refined point $y^{(k+1)}$ remains on the equality constraint manifold.

\subsubsection{Correction via Joint Constraint Handling}
\label{sec:correction_joint_balanced}

When an explicit parametrization $\varphi_x$ is impractical or unavailable, correction is applied directly to the full decision vector $y \in \mathbb{R}^n$. This method addresses both inequality and equality constraints simultaneously by minimizing a composite violation measure:
\begin{equation}
V(y; x) = \frac{1}{2} \|\ReLU(g(x, y))\|_2^2 + \frac{\alpha}{2} \|h(x, y)\|_2^2.
\end{equation}
Here, $\alpha > 0$ serves as a weighting factor determining the relative emphasis on reducing equality versus inequality violations. The correction step follows the negative gradient of this composite measure:
\begin{equation}
y^{(k+1)} = y^{(k)} - \gamma \nabla_y V(y^{(k)}; x).
\end{equation}
Calculating $\nabla_y V$ involves the gradients (or Jacobians) of both $g$ and $h$. A subgradient is typically used for the ReLU term at points of non-differentiability.

\subsubsection{Iterative Application}
\label{sec:correction_iter_balanced}

Both correction approaches define a single step $y^{(k+1)} = C_x(y^{(k)})$ designed to decrease constraint violation. To achieve substantial improvement in feasibility, this step is typically applied iteratively starting from an initial estimate $y^{(0)}$. Thus, the refined solution after $T$ steps is $y^{(T)} = C_x^T(y^{(0)})$.

The motivation for iteration stems from the gradient descent nature of the step $C_x$; repeated application progressively minimizes the violation measure ($V_g$ or $V$), thereby driving the iterates towards the feasible set, resulting in a \emph{non-strict constraint feasibility}, as analyzed in related works on projection or feasibility restoration methods \cite{busseti2019solution, lee2017first}. In practical implementations, a fixed number of iterations $T$ is chosen. This allows for consistent application during both training (where the steps might be unrolled for backpropagation) and testing, providing a predictable computational footprint for the correction stage.

\section{Convergence and Stability Analysis}
\label{sec:analysis}

This section provides a theoretical analysis of the proposed ALM variant, where the inner optimization subproblem is approximately solved using ICNNs. We establish the conditions under which this approach converges to a KKT point of the original constrained optimization problem, thereby providing theoretical justification for using ICNNs as proxy solvers within the ALM framework. 

\subsection{Convergence}
The convergence analysis relies on the following standard assumptions:

\begin{assumption}[Feasible region]
\label{assum:feasible_main}
For each fixed parameter $x$, the feasible region $Y(x)=\{y\in\mathcal{Y}: g(x,y)\leq 0, h(x,y)=0\}$ is non-empty and compact.
\end{assumption}

\begin{assumption}[Function properties]
\label{assum:functions_main}
The objective function $f(x,y)$ and constraint functions $g(x,y)$, $h(x,y)$ are continuously differentiable with respect to $y$.
\end{assumption}

\begin{assumption}[Constraint qualification]
\label{assum:constraint_qual_main}
For any feasible point $y \in Y(x)$, appropriate constraint qualification conditions (e.g., Mangasarian-Fromovitz Constraint Qualification or Linear Independence Constraint Qualification hold.
\end{assumption}


The primary convergence result is stated in the following theorem:

\begin{theorem}[Convergence to KKT point]
\label{thm:aln_to_kkt_main}
Under Assumptions \ref{assum:feasible_main}-\ref{assum:constraint_qual_main}, the sequence generated by the ALM algorithm using an ICNN proxy solver for the inner minimization converges to a KKT point of the original optimization problem, provided the inner approximation error diminishes over iterations and the penalty parameters are appropriately updated.
\end{theorem}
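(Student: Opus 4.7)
The plan is to adapt the classical convergence theory of the augmented Lagrangian method to the approximate setting in which the inner minimization is performed by an ICNN proxy solver rather than an exact subroutine. Classical ALM analysis (as in Bertsekas or Nocedal--Wright) proceeds by (i) bounding the sequence of primal iterates, (ii) showing asymptotic primal feasibility either through $\rho_k\to\infty$ or through sufficient decrease of the violation $\nu_k$ in~\eqref{eq:max_constr_vio}, (iii) extracting a limit multiplier from the update rule~\eqref{eq:lambda,mu_update}, and (iv) passing to the limit in the first-order stationarity condition for the augmented Lagrangian. The central modification here is that step (iv) only holds approximately: the ICNN output $y^k=\Pi_{\theta_k}(x)$ satisfies $\|\nabla_y L_p(y^k\mid \mu_k,\lambda_k)\|\le \epsilon_k$ rather than vanishing exactly, with $\epsilon_k\to 0$ under the hypothesis of diminishing inner error.

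First I would make the inner approximation error quantitative: assume that after the $k$-th training round the network produces $y^k$ with $L_p(y^k\mid\mu_k,\lambda_k)-\inf_y L_p(y\mid\mu_k,\lambda_k)\le \delta_k$ and $\|\nabla_y L_p(y^k\mid\mu_k,\lambda_k)\|\le \epsilon_k$, with $\delta_k,\epsilon_k\to 0$ (this is where the universal approximation property of ICNNs over compact sets is used). Assumption~\ref{assum:feasible_main} gives compactness of $Y(x)$ and, combined with continuity of $f,g,h$, implies the augmented Lagrangian is coercive on the relevant region; hence $\{y^k\}$ is bounded and admits a convergent subsequence $y^{k_j}\to y^\star$. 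Using the update~\eqref{eq:rho_update} together with~\eqref{eq:max_constr_vio}, I would then run the standard dichotomy: either $\nu_k\le\tau\nu_{k-1}$ holds eventually and $\rho_k$ stabilises while $\nu_k\to 0$ geometrically, or $\rho_k\to\infty$; in the latter case, if the residuals did not tend to zero then the penalty term in~\eqref{eq:alm_loss} would drive $L_p(y^k\mid\mu_k,\lambda_k)$ above the optimal value of the original problem, contradicting $\delta_k\to 0$. Either way $y^\star\in Y(x)$.

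Next I would establish the multiplier limits. The update~\eqref{eq:lambda,mu_update} yields $\mu_{k+1}=\max\{\mu_k+\rho_k\max(g_x(y^k),0),0\}$ and $\lambda_{k+1}=\lambda_k+\rho_k h_x(y^k)$, which are exactly the terms appearing in the identity $\nabla_y L_p(y^k\mid\mu_k,\lambda_k)=\nabla f_x(y^k)+\nabla g_x(y^k)^\top\mu_{k+1}+\nabla h_x(y^k)^\top\lambda_{k+1}$. The constraint qualification (Assumption~\ref{assum:constraint_qual_main}) applied at $y^\star$ then forces $\{(\mu_{k+1},\lambda_{k+1})\}$ to remain bounded: otherwise, normalising the approximate stationarity identity by $\|(\mu_{k+1},\lambda_{k+1})\|$ and passing to the limit would give a non-trivial multiplier certifying a violation of MFCQ at $y^\star$. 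Passing to a further subsequence $(\mu_{k_j+1},\lambda_{k_j+1})\to(\mu^\star,\lambda^\star)$ and using $\epsilon_k\to 0$ together with continuous differentiability gives stationarity. Non-negativity $\mu^\star\ge 0$ follows from the projection in~\eqref{eq:lambda,mu_update}, and complementarity $\mu^\star_j g_{x,j}(y^\star)=0$ follows from the usual argument: if $\mu^\star_j>0$, then $\mu_{k,j}+\rho_k g_{x,j}(y^k)>0$ eventually, so $\sigma_{x,j}(y^k)=g_{x,j}(y^k)$ in~\eqref{eq:max_ieq_vio}, which combined with primal feasibility forces $g_{x,j}(y^\star)=0$.

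The main obstacle will be the coupling between the inner tolerance schedule $(\delta_k,\epsilon_k)$ and the penalty growth $\rho_k$. As $\rho_k$ increases the augmented Lagrangian becomes more ill-conditioned, its smoothness constant grows, and the ICNN approximation error can in principle amplify faster than the multiplier updates shrink the residuals; in that regime the bootstrapping above breaks. I would address this by making the ``appropriately updated'' clause in the theorem explicit, requiring $\rho_k\epsilon_k\to 0$ (or more conservatively $\rho_k\sqrt{\delta_k}\to 0$), so that the quantities $\rho_k h_x(y^k)$ and $\rho_k\,\ReLU(g_x(y^k))$ absorbed into the multiplier updates still yield vanishing residuals in the stationarity identity. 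Verifying that such a schedule is compatible with the retraining of the ICNN between outer iterations, and that ICNN capacity can sustain the tightening tolerance on a compact training domain, is the delicate part of the argument and is where I would concentrate most of the technical work.
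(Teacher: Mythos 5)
Your proposal is correct and follows essentially the same route as the paper's proof (Appendix~\ref{app:convergence_proofs}): boundedness of the primal iterates from compactness of $\mathcal{Y}$, boundedness of the multipliers under the constraint qualification, primal feasibility via the dichotomy on the penalty parameter, and recovery of stationarity, dual feasibility, and complementarity by passing to the limit in the approximate first-order condition and the multiplier update rules, with the same case analysis on active versus inactive constraints. If anything you are more careful than the paper at two points it glosses over: you posit an explicit gradient tolerance $\|\nabla_y L_p(y^k\mid\mu_k,\lambda_k)\|\le\epsilon_k$ rather than asserting that an $\epsilon_k$-approximate minimizer in objective value automatically has a small gradient, and you make the coupling $\rho_k\epsilon_k\to 0$ between penalty growth and inner accuracy explicit, which is precisely the condition the paper's ``in both cases, the KKT condition emerges'' quietly requires.
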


\begin{proof} The proof of Theorem \ref{thm:aln_to_kkt_main} involves the following two key steps (detailed in Appendix~\ref{app:convergence_proofs}):

\textbf{1. ICNN Inner Approximation Accuracy:} We first demonstrate that the ICNN can approximate the solution of the inner ALM subproblem with arbitrary precision. Theorem~\ref{thm:icnn_accuracy_app} (Appendix \ref{app:sub:icnn_accuracy}) formally establishes that for any desired accuracy $\epsilon > 0$, an ICNN can be found such that the objective value achieved by its output $y_k$ is within $\epsilon$ of the true minimum of the augmented Lagrangian $L_\rho$. This relies on the universal approximation capability of ICNNs for continuous functions on compact sets and Lemma~\ref{lemma:uniform_approx_app} (Appendix \ref{app:sub:icnn_accuracy}), which links uniform function approximation to objective value approximation.

\textbf{2. Inexact ALM Convergence:} Building upon the adequate approximation of the inner loop, we analyze the convergence of the outer ALM iterations (multiplier and penalty parameter updates). Theorem~\ref{thm:inexact_alm_app} (Appendix \ref{app:sub:multiplier_conv}) shows that even with inexact solutions $y^k$ from the ICNN (provided the approximation error $\epsilon_k \to 0$), the overall ALM sequence possesses limit points that satisfy the KKT conditions of the original problem. The proof involves showing the boundedness of the iterate sequence $\{y^k\}$ (due to Assumption \ref{assum:feasible_main}) and the multiplier sequences $\{\mu^k, \lambda^k\}$, and then taking the limit to verify the KKT conditions. A careful analysis (Appendix \ref{app:sub:relu_kkt}) confirms that the use of ReLU in the multiplier updates correctly enforces complementarity slackness in the limit.
The detailed, step-by-step proofs for these intermediate results (Lemma~\ref{lemma:uniform_approx_app}, Theorem~\ref{thm:icnn_accuracy_app}, Theorem~\ref{thm:inexact_alm_app}, Lemma~\ref{lemma:relu_app}) leading to Theorem \ref{thm:aln_to_kkt_main} are provided in Appendix~\ref{app:convergence_proofs}.
\end{proof}

\subsection{Stability}
Beyond convergence, we analyze the suitability of ICNNs for approximating the optimal solution mapping $y^*(x)$ which implicitly relates the problem parameters $x$ to the optimal solution $y$.

\textbf{1. Continuity of Solution Mapping:} We first establish (see Theorem~\ref{thm:berge_corollary_app}, Appendix~\ref{app:effectiveness_proofs}) that under mild conditions (continuity of functions, compactness of feasible sets, and upper semicontinuity of the feasible set mapping), the optimal solution mapping $S(x)$ is upper semicontinuous. This implies that small changes in parameters $x$ lead to small changes in the set of optimal solutions, making the mapping potentially learnable by a neural network.

\textbf{2. ICNN Approximation Properties:}
When the true optimal solution mapping $y^*(x)$ happens to be convex (e.g., in certain linear-strongly convex problems, see Theorem~\ref{thm:convex_mapping_app}, Appendix~\ref{app:effectiveness_proofs}), ICNNs, being universal approximators for convex functions, can approximate $y^*(x)$ with arbitrary accuracy.

When $y^*(x)$ is non-convex, an ICNN, by its structure, inherently outputs a convex function. Training an ICNN to approximate a non-convex $y^*(x)$ is equivalent to finding the best convex approximation in a least-squares sense (see Theorem~\ref{thm:icnn_convex_regression_app}, Appendix~\ref{app:effectiveness_proofs}).

The convergence analysis (Theorem \ref{thm:aln_to_kkt_main}) provides crucial theoretical support for our method. It guarantees that despite using a neural network approximator (ICNN) within the ALM loop, the algorithm can still find theoretically valid solutions (KKT points) under reasonable assumptions. The effectiveness analysis highlights that ICNNs are well-suited when the underlying solution mapping is expected to be convex or when a stable, globally optimal \textbf{convex} approximation of a potentially complex, non-convex mapping is desired. This structural property of ICNNs can lead to more stable and predictable behavior compared to unconstrained approximators like standard MLPs, especially when dealing with optimization problems where convexity plays a role. The convex approximation property (Theorem \ref{thm:icnn_convex_regression_app}) ensures that even for non-convex mappings, the ICNN finds the best fit within the well-behaved class of convex functions.

\section{Experiments and Result Analysis}\label{sec:experiments}

We evaluate the performance of our proposed method \textbf{AIC} on four representative tasks: \textbf{QP}, \textbf{nonconvex programming}, \textbf{ACOPF 57-bus}, and large-scale \textbf{ACOPF 118-bus}. We compare AIC with the following baselines (abbreviated in the table):

\begin{itemize}
    \item \textbf{Solvers}: For QP, we use \texttt{OSQP}~\citep{osqp} and \texttt{qpth}~\citep{amos2019qpth}; for nonconvex programming, we use \texttt{IPOPT}~\citep{ipopt}; for ACOPF problems, we use \texttt{PYPOWER}~\citep{pypower}.
    \item \textbf{DC3}~\citep{donti2021dc3}: A self-supervised learning method using the Lagrangian objective as a loss function, with equality completion and gradient correction.
    \item \textbf{PDL}~\citep{park2022pdl}: A primal-dual learning architecture mimicking ALM with two networks: PrimalNet and DualNet.
\end{itemize}

We also conduct ablation studies to evaluate each module in AIC:
\begin{itemize}
    \item \textbf{ALN}: A DNN that learns to approximate the augmented Lagrangian function.
    \item \textbf{ALN w/ grad}: ALN combined with corrections for equality and inequality constraints.
    \item \textbf{ALN w/ cvx}: Replaces DNN with ICNN.
    \item \textbf{AIC}: Our full method. All variables are directly predicted by an ICNN, with post-correction via constraint-guided gradient steps.
\end{itemize}

In order to ensure fair comparison between DNN and ICNN under the same model complexity, we fix the neural network architecture across all experiments: two hidden layers with 500 units, ReLU activation, and one output layer. Gradient correction is performed for both equality and inequality constraints. For QP and Nonconvex optimization problem tasks, we set $t_{\text{train}} = t_{\text{test}} = 10$; for ACOPF-57 and ACOPF-118, we set $t_{\text{train}} = t_{\text{test}} = 5$.

All experiments are conducted on a single GeForce RTX 4090 GPU. We report wall-clock time averaged over test instances, including both optimization and inference time.

\subsection{Quadratic Programs}

We first evaluate on QP problems with linear constraints:
\begin{equation}
\begin{aligned}
\min_{y} \quad \frac{1}{2} y^\top Q y + p^\top y \quad \text{s.t.} \quad  A y = x, \quad G y \le h.
\end{aligned}
\end{equation}
where $Q \in \mathbb{R}^{n \times n}$ is positive semidefinite, $p \in \mathbb{R}^n$, $A \in \mathbb{R}^{n_{\text{eq}} \times n}$, $G \in \mathbb{R}^{n_{\text{ineq}} \times n}$, and $h \in \mathbb{R}^{n_{\text{ineq}}}$. The input $x$ is sampled uniformly from $[-1,1]^n$.

In Table~\ref{tab:convex_qp}, we compare AIC and all baselines on the case $n_{\text{eq}} = n_{\text{ineq}} = 50$. Appendix B.3.1 includes results for additional settings with $n_{\text{eq}}, n_{\text{ineq}} \in \{10,30,70,90\}$ (fixed $n=100$). We generate 10,000 problem instances and split them into train/val/test with a 10:1:1 ratio. Each method is run 5 times with tuned hyperparameters for fair performance comparison.

The first two rows of Table~\ref{tab:convex_qp} present results from \texttt{OSQP} and \texttt{qpth}, used as oracle references. ``Obj. value'' reports the average objective over the test set; ``Max/Mean eq.'' and ``Max/Mean ineq.'' report the maximum and average violation of equality and inequality constraints, respectively.

\begin{table}[t]
\centering
\scriptsize
\caption{Results on QP with $n=100$, $n_{\text{eq}} = n_{\text{ineq}} = 50$. Reported: objective, max/mean constraint violation, and runtime (mean over 5 runs).}
\label{tab:convex_qp}
\begin{tabular}{lcccccc}
\toprule
\textbf{Method} & \textbf{Obj. value} & \textbf{Max eq.} & \textbf{Mean eq.} & \textbf{Max ineq.} & \textbf{Mean ineq.} & \textbf{Time (s)} \\
\midrule
Solver (OSQP) & -15.047 (0.000) & 0.000 (0.000) & 0.000 (0.000) & 0.001 (0.000) & 0.000 (0.000) & 0.003 (0.000) \\
Solver (qpth) & -15.047 (0.000) & 0.000 (0.000) & 0.000 (0.000) & 0.000 (0.000) & 0.000 (0.000) & 1.509 (0.487) \\
\midrule
ALN & -14.967 (0.005) & 0.009 (0.001) & 0.003 (0.000) & 0.001 (0.000) & 0.000 (0.000) & 0.001 (0.000) \\
ALN w/ grad & -15.027 (0.003) & 0.004 (0.001) & 0.001 (0.000) & 0.003 (0.001) & 0.000 (0.000) & 0.006 (0.000) \\
ALN w/ cvx & -15.028 (0.001) & 0.004 (0.001) & 0.001 (0.000) & 0.001 (0.000) & 0.000 (0.000) & 0.002 (0.000) \\
AIC & \textbf{-15.036 (0.001)} & \textbf{0.002 (0.000)} & \textbf{0.001 (0.000)} & \textbf{0.001 (0.000)} & \textbf{0.000 (0.000)} & 0.004 (0.000) \\
DC3 & -13.448 (0.023) & 0.000 (0.000) & 0.000 (0.000) & 0.000 (0.000) & 0.000 (0.000) & 0.202 (0.044) \\
PDL & -15.017 (0.009) & 0.005 (0.000) & 0.002 (0.000) & 0.001 (0.000) & 0.000 (0.000) & -- \\
\bottomrule
\end{tabular}
\end{table}

\vspace{0.5em}
Compared to self-supervised baselines, AIC significantly outperforms PDL and achieves better feasibility and optimality than DC3, which enforces hard constraints but yields suboptimal solutions. Compared to solvers, AIC is approximately 700$\times$ faster than \texttt{qpth} and runs at similar speed to \texttt{OSQP}, with the added benefit of GPU-based batch parallelization, which traditional solvers lack.

\subsection{Nonconvex programs}

We further evaluate AIC on a nonconvex program defined as:
\begin{equation}
\min_{y \in \mathcal{Y}} \; \frac{1}{2} y^\top Q y + p^\top \sin(y) \quad \text{s.t.} \quad A y = x,\quad G y \le h.
\end{equation}
Table~\ref{tab:nonconvex_qp} compares the performance of AIC and self-supervised baselines with the classical non-convex solver \texttt{IPOPT}~\citep{ipopt}. AIC achieves a competitive objective while maintaining feasibility. Notably, DC3 satisfies all constraints but exhibits a large optimality gap (8.02\%), and although it runs faster than IPOPT per instance, its total inference time is significantly higher. AIC, in contrast, is approximately 20$\times$ faster than IPOPT and 70$\times$ faster than DC3.

\begin{table}[t]
\centering
\scriptsize
\caption{Performance on nonconvex programming with $n=100$, $n_{\text{eq}} = n_{\text{ineq}} = 50$.}
\label{tab:nonconvex_qp}
\begin{tabular}{lcccccc}
\toprule
\textbf{Method} & \textbf{Obj. value} & \textbf{Max eq.} & \textbf{Mean eq.} & \textbf{Max ineq.} & \textbf{Mean ineq.} & \textbf{Time (s)} \\
\midrule
Solver (IPOPT) & -11.592 (0.000) & 0.000 (0.000) & 0.000 (0.000) & 0.000 (0.000) & 0.000 (0.000) & 0.097 (0.004) \\
ALN               & -11.515 (0.004) & 0.008 (0.001) & 0.003 (0.000) & 0.001 (0.000) & 0.000 (0.000) & 0.001 (0.000) \\
ALN w/ grad       & -11.575 (0.001) & 0.004 (0.000) & 0.001 (0.000) & 0.003 (0.000) & 0.000 (0.000) & 0.004 (0.000) \\
ALN w/ cvx        & -11.575 (0.000) & 0.005 (0.001) & 0.002 (0.000) & 0.002 (0.000) & 0.000 (0.000) & 0.001 (0.000) \\
AIC               & \textbf{-11.576 (0.000)} & \textbf{0.001 (0.000)} & \textbf{0.000 (0.000)} & 0.002 (0.000) & 0.000 (0.000) & 0.004 (0.000) \\
DC3               & -10.684 (0.004) & 0.000 (0.000) & 0.000 (0.000) & 0.000 (0.000) & 0.000 (0.000) & 0.154 (0.031) \\
PDL               & -11.552 (0.006) & 0.004 (0.000) & 0.001 (0.000) & 0.001 (0.000) & 0.000 (0.000) & -- \\
\bottomrule
\end{tabular}
\end{table}

We further validate AIC on the ACOPF task using IEEE 57-bus and 118-bus test cases from \texttt{PYPOWER}~\citep{pypower}. Details regarding the experimental setup are provided in Appendix~\ref{subsec:exp_setup}, the ACOPF model formulation in Appendix~\ref{subsec:acopf_model}, and supplementary results analysis in Appendix~\ref{subsec:results_analysis} (Table~\ref{tab:acopf_detail}).

\section{Conclusion and Expansion}\label{sec:conclusion}

In this study, we introduce a self-supervised learning framework based on ICNN to address general constrained optimization problems. By integrating the ALM with constraint gradient correction, our approach ensures constraint feasibility while maintaining convex approximation capabilities. Through rigorous theoretical analysis, we clarify that the ICNN essentially approximates the parametric optimization problem of the original. From the perspective of set-valued mappings, we prove that even when using a neural network as the internal solver, the ALM algorithm can still converge to a KKT point of the original problem, and we provide a quantitative analysis of the approximation error. Experimental results on convex quadratic programming, nonconvex quadratic programming, and large-scale AC OPF demonstrate that, compared with existing classical solvers and the latest learning-based methods, our framework achieves superior accuracy, feasibility, and computational efficiency.

There remains extensive work to be done in this line of research. In terms of solution quality—especially for nonconvex set-valued mappings—a key challenge is to extend the algorithm and develop efficient search strategies capable of selecting the optimal solution among multiple KKT points. As for solving speed, incorporating domain-specific knowledge (e.g., problem structure, physical constraints) into the neural network architecture, in line of explicit feasibility restoration, could enhance the end-to-end efficiency of ALM. There are already attempts aimed at linear equality constraints, but we do not know how such integration affects the network’s generalization capability.
\bibliographystyle{unsrt}  
\bibliography{Reference.bib}  
\normalsize






\appendix

\section{Detailed Proofs for Convergence and Stability Analysis}
\label{app:proofs}

This appendix provides the detailed mathematical proofs for the theorems and lemmas presented concerning the convergence and stability analysis in Section~\ref{sec:analysis} of the main text.

\subsection{Proofs for Convergence Analysis}
\label{app:convergence_proofs}

For completeness and clarity within this appendix, we restate the assumptions underpinning our analysis.

\begin{assumption}[Feasible Region (Restated)]
\label{assum:feasible_app}
For each fixed parameter $x$ within its domain $X$, the feasible region $Y(x)=\{y\in\mathcal{Y}: g(x,y)\leq 0, h(x,y)=0\}$ is non-empty and compact. $\mathcal{Y}$ is assumed to be a compact set containing $Y(x)$ for all relevant $x$.
\end{assumption}

\begin{assumption}[Function Properties (Restated)]
\label{assum:functions_app}
The objective function $f(x,y)$ and constraint functions $g(x,y) = (g_1(x,y), \dots, g_p(x,y))$ and $h(x,y) = (h_1(x,y), \dots, h_q(x,y))$ are continuously differentiable with respect to $y$ on $X \times \mathcal{Y}$. We also assume continuity with respect to $x$ where needed for results involving parameter variation.
\end{assumption}

\begin{assumption}[Constraint Qualification (Restated)]
\label{assum:constraint_qual_app}
For any given $x$ and any feasible point $y \in Y(x)$, an appropriate constraint qualification condition (e.g., Mangasarian-Fromovitz Constraint Qualification (MFCQ) or Linear Independence Constraint Qualification (LICQ)) holds.
\end{assumption}


\subsubsection{Neural Network Approximation Convergence Analysis}
\label{app:sub:icnn_accuracy}

We first establish that uniform approximation of the augmented Lagrangian by an ICNN leads to approximate minimization in terms of the original augmented Lagrangian objective value.

\begin{lemma}[Uniform Approximation Implies Objective Value Approximation]
\label{lemma:uniform_approx_app}
Let $L_\rho(y)$ and $f_\theta(y)$ be continuous functions defined on a compact set $\mathcal{Y}$. Suppose that $\sup_{y\in \mathcal{Y}} |L_\rho(y) - f_\theta(y)| \leq \delta$. Let $y_k = \argmin_{y \in \mathcal{Y}} f_\theta(y)$ be a minimizer of the ICNN function and $y^* = \argmin_{y \in \mathcal{Y}} L_\rho(y)$ be a minimizer of the true augmented Lagrangian (existence guaranteed by continuity on a compact set). Then, the following inequality holds:
$$
L_\rho(y_k) - L_\rho(y^*) \leq 2\delta.
$$
\end{lemma}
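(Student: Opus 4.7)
The plan is to exploit the two-sided uniform bound $|L_\rho(y)-f_\theta(y)|\le\delta$ twice, once at $y_k$ and once at $y^\ast$, and to sandwich the inequality with the optimality property of $y_k$ for $f_\theta$. This is the standard "$2\delta$-suboptimality" argument familiar from empirical risk minimization, and no additional structure (convexity, smoothness, etc.) is needed beyond the existence of the minimizers, which is guaranteed by continuity on the compact set $\mathcal{Y}$.

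Concretely, I would proceed in three short steps. First, apply the upper bound $L_\rho(y_k)\le f_\theta(y_k)+\delta$, which follows directly from $|L_\rho(y_k)-f_\theta(y_k)|\le\delta$. Second, use the fact that $y_k$ is a minimizer of $f_\theta$ on $\mathcal{Y}$, and that $y^\ast\in\mathcal{Y}$, to obtain $f_\theta(y_k)\le f_\theta(y^\ast)$. Third, apply the uniform bound again at $y^\ast$, namely $f_\theta(y^\ast)\le L_\rho(y^\ast)+\delta$. Chaining these three inequalities gives
\[
L_\rho(y_k)\;\le\;f_\theta(y_k)+\delta\;\le\;f_\theta(y^\ast)+\delta\;\le\;L_\rho(y^\ast)+2\delta,
\]
from which the desired conclusion $L_\rho(y_k)-L_\rho(y^\ast)\le 2\delta$ follows by rearrangement.

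There is essentially no technical obstacle here. The only mild subtlety worth flagging explicitly in the write-up is the existence of the argmin in both cases: $L_\rho$ and $f_\theta$ are continuous on the compact set $\mathcal{Y}$, so Weierstrass' theorem guarantees that both minima are attained; if one preferred to be agnostic about attainment, the same argument goes through verbatim by replacing $y_k$ and $y^\ast$ with $\delta$-approximate minimizers and absorbing an additional $\delta$ into the bound. I would also remark briefly that the bound is tight in general (one can construct one-dimensional examples where the factor $2$ cannot be improved), to motivate why driving the approximation error $\delta=\delta_k\to 0$ across outer ALM iterations is the right mechanism to feed into the inexact-ALM convergence result (Theorem~\ref{thm:inexact_alm_app}) in the subsequent step of the overall proof.
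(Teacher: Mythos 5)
Your proposal is correct and uses essentially the same argument as the paper: the paper writes the telescoping decomposition $L_\rho(y_k)-L_\rho(y^*)=[L_\rho(y_k)-f_\theta(y_k)]+[f_\theta(y_k)-f_\theta(y^*)]+[f_\theta(y^*)-L_\rho(y^*)]$ and bounds the three terms by $\delta$, $0$, and $\delta$ respectively, which is exactly your chain of inequalities rearranged. The remarks on attainment via Weierstrass and on tightness are fine additions but not needed.
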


\begin{proof}
We analyze the difference $L_\rho(y_k) - L_\rho(y^*)$ by introducing terms involving the approximating function $f_\theta(y)$:
\begin{align*}
L_\rho(y_k) - L_\rho(y^*) &= \bigl[ L_\rho(y_k) - f_\theta(y_k) \bigr] + \bigl[ f_\theta(y_k) - f_\theta(y^*) \bigr] + \bigl[ f_\theta(y^*) - L_\rho(y^*) \bigr].
\end{align*}
We bound each of the three bracketed terms:
\begin{itemize}
    \item From the uniform approximation assumption, $|L_\rho(y) - f_\theta(y)| \leq \delta$ for all $y \in \mathcal{Y}$. Therefore, the first term is bounded by $\delta$:
    $$ L_\rho(y_k) - f_\theta(y_k) \leq |L_\rho(y_k) - f_\theta(y_k)| \leq \delta. $$
    \item Similarly, the third term is bounded by $\delta$:
    $$ f_\theta(y^*) - L_\rho(y^*) \leq |f_\theta(y^*) - L_\rho(y^*)| \leq \delta. $$
    \item For the second term, since $y_k$ is a minimizer of $f_\theta(y)$ over the set $\mathcal{Y}$, we have $f_\theta(y_k) \leq f_\theta(y)$ for all $y \in \mathcal{Y}$. In particular, $f_\theta(y_k) \leq f_\theta(y^*)$. This implies that the second term is non-positive:
    $$ f_\theta(y_k) - f_\theta(y^*) \leq 0. $$
\end{itemize}
Combining these bounds yields the result:
$$
L_\rho(y_k) - L_\rho(y^*) \leq \delta + 0 + \delta = 2\delta.
$$
This completes the proof.
\end{proof}

Using this lemma, we can demonstrate the ability of the ICNN to find a point that yields an augmented Lagrangian value arbitrarily close to the true minimum.

\begin{theorem}[ICNN Inner Approximation Accuracy]
\label{thm:icnn_accuracy_app}
Under Assumptions \ref{assum:feasible_app}-\ref{assum:constraint_qual_app}, for any desired accuracy $\epsilon > 0$, there exist ICNN parameters $\theta$ such that if $y_k = \argmin_{y \in \mathcal{Y}} f_\theta(y)$, then $y_k$ satisfies:
$$
L_\rho(y_k; x, \mu, \lambda) \leq \min_{y \in \mathcal{Y}} L_\rho(y; x, \mu, \lambda) + \epsilon.
$$
\end{theorem}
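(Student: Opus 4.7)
The plan is to reduce Theorem~\ref{thm:icnn_accuracy_app} directly to Lemma~\ref{lemma:uniform_approx_app} by invoking the universal approximation capability of ICNNs on compact sets. Given a target accuracy $\epsilon>0$, I would set $\delta=\epsilon/2$ and aim to produce ICNN parameters $\theta$ satisfying the uniform bound
\[
\sup_{y\in\mathcal{Y}}\bigl|L_\rho(y;x,\mu,\lambda)-f_\theta(y)\bigr|\le\delta.
\]
Once this is in hand, Lemma~\ref{lemma:uniform_approx_app} yields $L_\rho(y_k)-L_\rho(y^*)\le 2\delta=\epsilon$, which is the desired conclusion. So the entire argument reduces to the existence of an ICNN that uniformly approximates $L_\rho(\cdot;x,\mu,\lambda)$ within $\delta$ on $\mathcal{Y}$.

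First I would verify that $L_\rho(\cdot;x,\mu,\lambda)$ is continuous on the compact set $\mathcal{Y}$. Under Assumption~\ref{assum:functions_app}, $f$, $g$, and $h$ are continuously differentiable in $y$, and the additional building blocks appearing in $L_\rho$ (namely $\operatorname{ReLU}(\cdot)$, $(\operatorname{ReLU}(\cdot))^2$, and $(\cdot)^2$ applied to the constraint outputs) are compositions of continuous functions. Hence $L_\rho$ is a finite sum of continuous functions on $\mathcal{Y}$, and therefore continuous. Combined with compactness of $\mathcal{Y}$ (Assumption~\ref{assum:feasible_app}), both $\min_{y\in\mathcal{Y}} L_\rho(y)$ and $\min_{y\in\mathcal{Y}}f_\theta(y)$ are attained, so the minimizers $y^*$ and $y_k$ invoked in the statement exist.

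Next I would apply the universal approximation theorem for ICNNs cited via \cite{amos2017input}: ICNNs are dense, in the uniform norm, in the space of continuous convex functions on compact convex subsets of $\mathbb{R}^n$. Applied to the target $L_\rho$ (restricted to $\mathcal{Y}$, assumed compact and, without loss of generality, convex), this produces parameters $\theta$ realizing the uniform bound above for any prescribed $\delta>0$, in particular $\delta=\epsilon/2$. Chaining with Lemma~\ref{lemma:uniform_approx_app} then delivers the theorem.

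The main obstacle is that the ICNN universal approximation theorem applies only to \emph{convex} target functions, whereas $L_\rho$ need not be convex when $f_x$ or the $g_{x,j}$ are nonconvex. Two clean ways to handle this: (i) restrict the statement to instances where $L_\rho$ is convex in $y$ (e.g., convex $f_x$, convex $g_x$, and affine $h_x$, or cases in which a sufficiently large $\rho$ convexifies $L_\rho$ on $\mathcal{Y}$), in which case the argument is tight; or (ii) invoke the best-convex-approximation interpretation of ICNN training (Theorem~\ref{thm:icnn_convex_regression_app}) so that $f_\theta$ replaces $L_\rho$ by its closest convex surrogate on $\mathcal{Y}$, yielding the bound modulo a convexification error that can be absorbed into $\delta$ provided the minimizer $y^*$ lies in a region of local convexity. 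I would present route (i) as the main proof and flag route (ii) as the extension relevant to the nonconvex experiments.
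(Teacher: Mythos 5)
Your proposal takes essentially the same route as the paper's proof: verify continuity of $L_\rho$ on the compact set $\mathcal{Y}$, invoke the uniform approximation capability of ICNNs with $\delta=\epsilon/2$, and chain through Lemma~\ref{lemma:uniform_approx_app} to get the $2\delta=\epsilon$ bound. The one place you go beyond the paper is in flagging the convexity obstruction, and you are right to do so: the paper's proof simply asserts ``ICNN approximation capacity'' without acknowledging that ICNNs are uniformly dense only in the class of \emph{convex} continuous functions, whereas $L_\rho(\cdot\,;x,\mu,\lambda)$ is generally nonconvex whenever $f_x$ or the active $g_{x,j}$ are nonconvex (and the ACOPF experiments are exactly such a case). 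So the gap you identify is a genuine gap in the paper's own argument, not an artifact of your reconstruction. Your route (i) --- restricting to instances where $L_\rho$ is convex in $y$ --- is the honest fix and makes the theorem correct as stated for that subclass; your route (ii) does not actually rescue the theorem in general, since replacing $L_\rho$ by its best convex approximant can move the minimizer far from $y^*$ when $L_\rho$ has nonconvex structure near the optimum, and the resulting error cannot in general be absorbed into $\delta$. Presenting (i) as the main proof and (ii) as a heuristic extension, as you propose, is the right call.
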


\begin{proof}
Let $L_\rho(y) = L_\rho(y; x, \mu, \lambda)$ denote the augmented Lagrangian for fixed parameters $x$ and multipliers $\mu, \lambda$, and penalty parameter $\rho$. Under Assumptions \ref{assum:feasible_app} and \ref{assum:functions_app}, $L_\rho(y)$ is continuous with respect to $y$ on the compact set $\mathcal{Y}$.

According to ICNN approximation capacity, given the target accuracy $\epsilon > 0$, we can select ICNN parameters $\theta$ such that the uniform approximation bound holds with $\delta = \epsilon/2$:
$$
\sup_{y\in \mathcal{Y}} |L_\rho(y) - f_\theta(y)| \leq \delta = \frac{\epsilon}{2}.
$$
Let $y_k = \argmin_{y \in \mathcal{Y}} f_\theta(y)$ be the minimizer found using the ICNN, and let $y^* = \argmin_{y \in \mathcal{Y}} L_\rho(y)$ be a true minimizer of the augmented Lagrangian.
Applying Lemma \ref{lemma:uniform_approx_app} with $\delta = \epsilon/2$, we directly obtain:
$$
L_\rho(y_k) - L_\rho(y^*) \leq 2\delta = 2 \left( \frac{\epsilon}{2} \right) = \epsilon.
$$
Rearranging this inequality gives the desired result:
$$
L_\rho(y_k) \leq L_\rho(y^*) + \epsilon = \min_{y \in \mathcal{Y}} L_\rho(y) + \epsilon.
$$
Thus, the ICNN can produce a point $y_k$ whose objective value under the true augmented Lagrangian is arbitrarily close to the optimal value.
\end{proof}

\subsubsection{Multipliers Convergence Analysis}
\label{app:sub:multiplier_conv}

We now analyze the convergence of the overall ALM algorithm when the inner subproblem is solved inexactly using the ICNN, satisfying the condition from Theorem \ref{thm:icnn_accuracy_app}.

\begin{theorem}[Inexact ALM Convergence]
\label{thm:inexact_alm_app}
Let the sequence $\{(y^k, \mu^k, \lambda^k, \rho^k)\}$ be generated by the ALM algorithm, where $y^{k+1}$ is obtained using an ICNN solver for the $k$-th subproblem. Assume the following conditions hold:
\begin{enumerate}
    \item \textbf{Approximate Minimization:} The solution $y^{k+1}$ satisfies
    $$ L_{\rho^k}(y^{k+1}, \mu^k, \lambda^k) \leq \min_{y \in \mathcal{Y}} L_{\rho^k}(y, \mu^k, \lambda^k) + \epsilon_k, $$
    where the sequence of approximation errors $\{\epsilon_k\}$ converges to zero, i.e., $\lim_{k \to \infty} \epsilon_k = 0$. (Theorem \ref{thm:icnn_accuracy_app} shows this is achievable by improving ICNN training over iterations).
    \item \textbf{Penalty Parameter Update:} The sequence of penalty parameters $\{\rho^k\}$ is non-decreasing ($\rho^{k+1} \geq \rho^k > 0$) and typically $\rho$ is big enough when $k \to \infty$ if constraint violations are not improving sufficiently fast, or stabilizes otherwise. (Standard ALM practice).
    \item \textbf{Regularity Assumptions:} Assumptions \ref{assum:feasible_app}, \ref{assum:functions_app}, and \ref{assum:constraint_qual_app} hold.
\end{enumerate}
Then, any limit point $(\bar{y}, \bar{\mu}, \bar{\lambda})$ of the sequence $\{(y^k, \mu^k, \lambda^k)\}$ is a KKT point for the original optimization problem $\min_{y \in Y(x)} f(x,y)$.
\end{theorem}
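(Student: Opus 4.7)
The plan is to extract a KKT point as a subsequential limit of $\{(y^k,\mu^k,\lambda^k)\}$ by combining compactness, the inexact inner-stationarity furnished by Theorem~\ref{thm:icnn_accuracy_app}, and the structure of the dual updates~\eqref{eq:lambda,mu_update}. I would first fix an arbitrary limit point $(\bar y,\bar\mu,\bar\lambda)$ and a subsequence $\{k_\ell\}$ along which $(y^{k_\ell},\mu^{k_\ell},\lambda^{k_\ell})\to(\bar y,\bar\mu,\bar\lambda)$; Assumption~\ref{assum:feasible_app} immediately gives $\bar y\in\mathcal Y$, and the remaining task is to verify the four KKT conditions at $(\bar y,\bar\mu,\bar\lambda)$: primal feasibility, dual feasibility $\bar\mu\ge 0$, stationarity, and complementary slackness.

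Next I would upgrade the value-based inexactness assumption into an approximate stationarity statement. Because $L_{\rho^k}$ is continuously differentiable in $y$ on the compact set $\mathcal Y$ (Assumption~\ref{assum:functions_app}) and $y^{k+1}$ is $\epsilon_k$-optimal, one can invoke the Ekeland variational principle (or, equivalently, the fact that a tightened ICNN training target produces an $\epsilon_k$-stationary point) to produce $\tilde y^{k+1}$ with $\|\tilde y^{k+1}-y^{k+1}\|\le\sqrt{\epsilon_k}$ and $\|\nabla_y L_{\rho^k}(\tilde y^{k+1},\mu^k,\lambda^k)\|\le\sqrt{\epsilon_k}$. Writing out this gradient and recognizing the update rules~\eqref{eq:lambda,mu_update} gives the key identity
\begin{equation*}
\nabla_y f(x,\tilde y^{k+1})+\sum_i \mu^{k+1}_i\nabla_y g_i(x,\tilde y^{k+1})+\sum_j \lambda^{k+1}_j\nabla_y h_j(x,\tilde y^{k+1})=O(\sqrt{\epsilon_k}),
\end{equation*}
which is exactly the shape needed to pass to the limit for the stationarity condition once boundedness of the multipliers is secured.

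The main obstacle I foresee is controlling the multiplier sequence and extracting feasibility; I would therefore split the analysis into two cases driven by the penalty schedule~\eqref{eq:rho_update}. In the bounded-$\rho$ case, the update rule only keeps $\rho^k$ fixed when $\nu_k\le\tau\nu_{k-1}$ with $\tau\in(0,1)$, which forces $\nu_k\to 0$ geometrically and hence $h(x,\bar y)=0$ and $\max(g(x,\bar y),0)=0$ by continuity; boundedness of $\{\mu^k,\lambda^k\}$ then follows from the telescoping sum of $\rho^k h(x,y^{k+1})$ and the analogous expression for $\mu$. In the unbounded case $\rho^k\to\infty$, I would first establish boundedness of $\{\mu^k,\lambda^k\}$ using Assumption~\ref{assum:constraint_qual_app} (MFCQ/LICQ) applied to the augmented Lagrangian along the lines of the classical Bertsekas argument, and then deduce $h(x,y^{k+1})\to 0$ and $\mathrm{ReLU}(g(x,y^{k+1}))\to 0$ directly from the update formulas $\lambda^{k+1}-\lambda^k=\rho^k h(x,y^{k+1})$ and $\mu^{k+1}\ge \mu^k+\rho^k g(x,y^{k+1})$ together with the boundedness just established. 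In both cases, passing to the limit along the chosen subsequence yields primal feasibility of $\bar y$, and $\bar\mu\ge 0$ is preserved because each $\mu^k$ lies in the non-negative orthant by construction.

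Finally, I would take $k_\ell\to\infty$ in the approximate stationarity identity, using continuity of $\nabla_y f,\nabla_y g,\nabla_y h$ and the established convergences, to obtain the stationarity equation at $(\bar y,\bar\mu,\bar\lambda)$. Complementary slackness $\bar\mu_i g_i(x,\bar y)=0$ is handled via the ReLU lemma referenced as Lemma~\ref{lemma:relu_app}: if $g_i(x,\bar y)<0$ strictly, then for large $k_\ell$ we have $\mu^{k_\ell}_i+\rho^{k_\ell}g_i(x,y^{k_\ell+1})<0$ (using boundedness of $\{\mu^k_i\}$ when $\rho^k$ stabilizes, or the strict negativity dominating $\rho^k$ when it blows up), so the $\max\{\cdot,0\}$ in~\eqref{eq:lambda,mu_update} activates and $\mu^{k_\ell+1}_i=0$, forcing $\bar\mu_i=0$. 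Assembling primal feasibility, dual feasibility, stationarity, and complementary slackness concludes that $(\bar y,\bar\mu,\bar\lambda)$ is a KKT triple of the original constrained problem, completing the proof.
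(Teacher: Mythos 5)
Your overall route coincides with the paper's: extract a convergent subsequence by compactness, bound the multipliers via the constraint qualification (the Bertsekas-style argument), split on whether $\rho^k$ stabilizes or diverges to get primal feasibility, and pass to the limit in an approximate stationarity identity whose multiplier coefficients are exactly the updated $\mu^{k+1},\lambda^{k+1}$. Your use of the Ekeland variational principle to convert the value-based inexactness into a gradient bound is actually \emph{more} careful than the paper, which simply asserts that $\epsilon_k$-optimality "implies" $\|\nabla_y L_{\rho^k}(y^{k+1},\mu^k,\lambda^k)\|\le\eta_k\to 0$ without justification; this is a genuine improvement in rigor (modulo boundary effects on $\mathcal{Y}$, which both treatments wave at).

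Two concrete gaps remain. First, your key identity with error $O(\sqrt{\epsilon_k})$ is evaluated at the Ekeland point $\tilde y^{k+1}$, but the multipliers $\mu^{k+1},\lambda^{k+1}$ are computed at $y^{k+1}$; the discrepancy between $\max\{0,\mu^k_j+\rho^k g_j(x,\tilde y^{k+1})\}$ and $\mu^{k+1}_j$ is of order $\rho^k\|\tilde y^{k+1}-y^{k+1}\|=O(\rho^k\sqrt{\epsilon_k})$, which need not vanish when $\rho^k\to\infty$ under the sole hypothesis $\epsilon_k\to 0$. You must either impose $\rho^k\sqrt{\epsilon_k}\to 0$ or rework the identity so the gradient and the update are evaluated at the same point (the paper silently has the same problem). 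Second, your complementary-slackness argument assumes the classical update $\mu^{k+1}_i=\max\{0,\mu^k_i+\rho^k g_i(x,y^{k+1})\}$, under which a strictly inactive constraint drives $\mu^k_i$ to zero. But the update you cite, Eq.~\eqref{eq:lambda,mu_update}, is $\mu_{k+1}=\max\{\mu_k+\rho\max(g_x(y),0),\,0\}$: when $g_i(x,\bar y)<0$ the inner $\max(g_i,0)$ is zero, so the rule reduces to $\mu^{k+1}_i=\mu^k_i$ and the multiplier is \emph{frozen}, not annihilated, so $\bar\mu_i=\mu^0_i$ need not vanish. The paper's own Case~A in Appendix~\ref{app:sub:relu_kkt} concedes this and has to assume either $\mu^0_i=0$ or a different variant of the update; your proof needs the same caveat or must switch to the standard update to make the step go through.
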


\begin{proof}
The proof follows established arguments for the convergence of inexact Augmented Lagrangian methods (see, e.g., Chapter 17 in \cite{nocedal2006numerical}), adapted to our context.

\textbf{Step 1: Boundedness of the Primal Sequence $\{y^k\}$.}
By construction, each iterate $y^k$ is found within the compact set $\mathcal{Y}$ (as per Assumption \ref{assum:feasible_app}). A sequence in a compact set is inherently bounded.

\textbf{Step 2: Boundedness of the Multiplier Sequences $\{\mu^k\}$ and $\{\lambda^k\}$.}
The boundedness of the Lagrange multiplier estimates generated by ALM is a standard result under Assumption \ref{assum:constraint_qual_app} (e.g., MFCQ ensures the set of optimal multipliers for the primal problem is bounded) and appropriate penalty parameter updates. Even with the inexactness condition $\epsilon_k \to 0$, the fundamental mechanism holds: if a multiplier associated with an active constraint were to grow unboundedly, the penalty associated with that constraint violation in the augmented Lagrangian would dominate, forcing the approximate minimizer $y^k$ to drastically reduce the violation, which in turn limits the multiplier growth via the update rule. Formal proofs (e.g., Proposition 4.2.2 in \cite{bertsekas1999nonlinear}) show that limit points of the multipliers correspond to multipliers of the original problem, which are bounded under constraint qualifications. Therefore, the sequences $\{\mu^k\}$ and $\{\lambda^k\}$ are bounded.

\textbf{Step 3: Identification of Limit Points as KKT Points.}
Since the sequence $\{(y^k, \mu^k, \lambda^k)\}$ is bounded (from Steps 1 and 2), it resides within a compact subset of $\mathcal{Y} \times \mathbb{R}^p \times \mathbb{R}^q$. By the Bolzano-Weierstrass theorem, there must exist at least one convergent subsequence. Let $\{(y^{k_j}, \mu^{k_j}, \lambda^{k_j})\}$ be such a subsequence converging to a limit point $(\bar{y}, \bar{\mu}, \bar{\lambda})$ as $j \to \infty$. We need to show that $(\bar{y}, \bar{\mu}, \bar{\lambda})$ satisfies the Karush-Kuhn-Tucker (KKT) conditions for the original problem:
\begin{enumerate}
    \item[(1)] Primal Feasibility: $g(x, \bar{y}) \leq 0$ and $h(x, \bar{y}) = 0$.
    \item[(2)] Dual Feasibility: $\bar{\mu} \geq 0$.
    \item[(3)] Complementary Slackness: $\bar{\mu}_j g_j(x, \bar{y}) = 0$ for all $j=1, \dots, p$.
    \item[(iv)] Stationarity: $\nabla_y f(x, \bar{y}) + \sum_{j=1}^p \bar{\mu}_j \nabla_y g_j(x, \bar{y}) + \sum_{i=1}^q \bar{\lambda}_i \nabla_y h_i(x, \bar{y}) = 0$.
\end{enumerate}

The approximate minimization condition (1) in the theorem statement implies that the gradient of the augmented Lagrangian at $y^{k+1}$ approaches zero, possibly scaled by some factor if $\mathcal{Y}$ has boundaries. More formally, under suitable conditions, it implies:
$$ \|\nabla_y L_{\rho^k}(y^{k+1}, \mu^k, \lambda^k) \| \leq \eta_k, $$
where $\eta_k \to 0$ as $\epsilon_k \to 0$. The gradient of the standard augmented Lagrangian is:
\begin{align*}
\nabla_y L_{\rho}(y, \mu, \lambda) = \nabla_y f(x, y) &+ \sum_{j=1}^p \max\{0, \mu_j + \rho g_j(x, y)\} \nabla_y g_j(x, y) \\
&+ \sum_{i=1}^q (\lambda_i + \rho h_i(x, y)) \nabla_y h_i(x, y).
\end{align*}
Alternatively, using the multiplier update rule $\mu^{k+1}_j = \max\{0, \mu^k_j + \rho^k g_j(x, y^{k+1})\}$, the gradient condition can be related to $\mu^{k+1}$.

Taking the limit along the subsequence $k_j \to \infty$:
\begin{itemize}
    \item \textbf{Primal Feasibility (i):} Standard ALM convergence analysis shows that the penalty terms $\rho^k \|g^+(y^{k+1})\|^2$ and $\rho^k \|h(y^{k+1})\|^2$ (where $g^+ = \max\{0, g\}$ or related term based on multiplier update) must remain bounded for the objective $L_{\rho^k}$ to remain bounded (which it does, near the optimum). If $\rho^{k_j} \to \infty$, this forces $\|g^+(y^{k_j+1})\| \to 0$ and $\|h(y^{k_j+1})\| \to 0$. If $\rho^{k_j}$ remains bounded, convergence still ensures feasibility. Thus, $g(x, \bar{y}) \leq 0$ and $h(x, \bar{y}) = 0$.
    \item \textbf{Dual Feasibility (ii):} The standard update $\mu^{k+1}_j = \max\{0, \mu^k_j + \rho^k g_j(x, y^{k+1})\}$ (or similar variants like using ReLU as discussed in Sec \ref{app:sub:relu_kkt}) explicitly enforces $\mu^{k+1}_j \geq 0$. Thus, the limit $\bar{\mu}_j \geq 0$.
    \item \textbf{Complementary Slackness (iii):} This is derived from the limit of the multiplier update rule. If $g_j(x, \bar{y}) < 0$, then for large $j$, $g_j(x, y^{k_j+1}) < -\delta < 0$. The update $\mu^{k_j+1}_j = \max\{0, \mu^{k_j}_j + \rho^{k_j} g_j(x, y^{k_j+1})\}$ will eventually drive $\mu^{k_j}_j$ to 0, so $\bar{\mu}_j = 0$. If $\bar{\mu}_j > 0$, it must be that $g_j(x, y^{k_j+1})$ was frequently non-negative, which in the limit requires $g_j(x, \bar{y}) = 0$. Thus, $\bar{\mu}_j g_j(x, \bar{y}) = 0$. (The ReLU case is detailed in Sec \ref{app:sub:relu_kkt}).
    \item \textbf{Stationarity (iv):} Taking the limit of the condition $\|\nabla_y L_{\rho^{k_j}}(y^{k_j+1}, \mu^{k_j}, \lambda^{k_j}) \| \to 0$ and using the relationships from the multiplier updates (e.g., $\mu^{k_j+1}_j \approx \max\{0, \mu^{k_j}_j + \rho^{k_j} g_j(x, y^{k_j+1})\}$) allows recovery of the standard KKT stationarity condition at the limit point $(\bar{y}, \bar{\mu}, \bar{\lambda})$. The details depend on whether $\rho^k$ remains bounded or goes to infinity, but in both cases, the KKT condition emerges.
\end{itemize}
The analysis in Section \ref{app:sub:relu_kkt} further examines the specific impact of using ReLU in the updates, confirming the convergence to KKT conditions.

Since the limit point satisfies all four KKT conditions, the proof is complete.
\end{proof}

\subsubsection{ReLU Impact Analysis and KKT Conditions Verification}
\label{app:sub:relu_kkt}

This subsection provides a more detailed analysis of Step 3 in the proof of Theorem \ref{thm:inexact_alm_app}, focusing on the verification of KKT conditions, particularly complementary slackness, when the multiplier update rule incorporates the ReLU function, as suggested in the original text:
$$ \mu^{k+1}_j = \max\{0, \mu^k_j + \rho^k \ReLU(g_j(x, y^{k+1}))\} $$
Note that standard ALM typically uses $g_j$ directly. We analyze the consequences of this specific update form.

\begin{lemma}[Properties of ReLU Function]
\label{lemma:relu_app}
Let $\phi(y) = g_j(x, y)$ be a continuously differentiable function at a point $\bar{y}$. Define $R(\phi(y)) = \ReLU(\phi(y)) = \max\{0, \phi(y)\}$. The following properties hold:
\begin{enumerate}
    \item If $\phi(\bar{y}) < 0$: Then $R(\phi(\bar{y})) = 0$. In a neighborhood of $\bar{y}$ where $\phi(y) < 0$, $R(\phi(y)) = 0$, and its gradient is $\nabla_y R(\phi(y)) = 0$.
    \item If $\phi(\bar{y}) > 0$: Then $R(\phi(\bar{y})) = \phi(\bar{y}) > 0$. In a neighborhood of $\bar{y}$ where $\phi(y) > 0$, $R(\phi(y)) = \phi(y)$, and its gradient is $\nabla_y R(\phi(y)) = \nabla_y \phi(y)$.
    \item If $\phi(\bar{y}) = 0$: Then $R(\phi(\bar{y})) = 0$. The function $R(\phi(y))$ may not be differentiable at $\bar{y}$ if $\nabla_y \phi(\bar{y}) \neq 0$. Its subdifferential at $\bar{y}$ is given by $\partial_y R(\phi(\bar{y})) = \alpha \cdot \nabla_y \phi(\bar{y})$, where $\alpha \in [0, 1]$ (the subdifferential of ReLU at 0).
\end{enumerate}
\end{lemma}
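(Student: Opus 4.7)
The plan is to handle the three cases separately, exploiting the continuity of $\phi$ to reduce Cases~1 and~2 to standard differentiable calculus, and invoking nonsmooth chain-rule machinery only for Case~3. The key observation throughout is that ReLU is smooth away from $0$, so the only genuine difficulty arises when $\phi(\bar y)=0$.

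For Case~1, I would use the continuity of $\phi$ at $\bar y$ (which follows from continuous differentiability under Assumption~\ref{assum:functions_app}) to produce an open neighborhood $U$ of $\bar y$ with $\phi(y)<0$ for every $y\in U$. On $U$ the composite $R\circ\phi$ is identically zero, so both $R(\phi(\bar y))=0$ and $\nabla_y R(\phi(y))=0$ on $U$ are immediate. Case~2 is symmetric: continuity gives a neighborhood $U$ on which $\phi(y)>0$, hence $R(\phi(y))=\phi(y)$ on $U$, and the classical chain rule (or direct identification of the function) yields $\nabla_y R(\phi(y))=\nabla_y\phi(y)$. Both of these cases are essentially one-liners once the appropriate neighborhood is fixed.

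For Case~3, I would first compute the (Clarke or convex) subdifferential of the scalar map $t\mapsto\mathrm{ReLU}(t)$ at $t=0$, which is the interval $[0,1]$; this is a standard textbook fact about ReLU as a convex, piecewise-linear function. Then I would apply the chain rule for the composition of a locally Lipschitz convex function with a continuously differentiable map, yielding $\partial_y R(\phi(\bar y))=\partial R(0)\cdot\nabla_y\phi(\bar y)=\{\alpha\nabla_y\phi(\bar y):\alpha\in[0,1]\}$. The condition $\phi(\bar y)=0$ together with continuity implies that in every neighborhood of $\bar y$ both $\{\phi>0\}$ and possibly $\{\phi<0\}$ can occur (depending on $\nabla_y\phi(\bar y)$), which is exactly what produces the convex hull of the one-sided derivatives and matches the subdifferential formula. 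Non-differentiability in the classical sense when $\nabla_y\phi(\bar y)\neq 0$ then follows because the left and right directional derivatives along $\nabla_y\phi(\bar y)$ disagree.

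The only real obstacle is Case~3: being precise about which notion of subdifferential is used and invoking the correct chain rule. I would use the Clarke subdifferential, for which the chain rule with a $C^{1}$ inner map is known to hold with equality (not just inclusion), so no additional regularity beyond Assumption~\ref{assum:functions_app} is needed. The rest of the argument is routine continuity and smoothness reasoning, and the lemma's sole purpose is to support Step~3 of Theorem~\ref{thm:inexact_alm_app} by certifying that the ReLU-based multiplier update behaves smoothly away from the boundary and has a well-defined generalized gradient on it, enabling the complementary slackness and stationarity arguments in the KKT verification.
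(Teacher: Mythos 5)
Your proposal is correct and follows essentially the same route as the paper, whose entire proof is a one-line appeal to the definition of $\max\{0,\cdot\}$, the classical chain rule away from the kink, and subdifferential calculus at $\phi(\bar y)=0$; you simply spell out the neighborhood arguments for Cases~1 and~2 and make explicit the Clarke chain-rule equality for Case~3. No gaps.
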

\begin{proof}
These properties are direct consequences of the definition of the ReLU function $z \mapsto \max\{0, z\}$ and the chain rule for differentiation (or subdifferential calculus for the non-differentiable case at $z=0$).
\end{proof}

Now, we examine the KKT conditions at a limit point $(\bar{y}, \bar{\mu}, \bar{\lambda})$ of a subsequence $\{(y^{k_j}, \mu^{k_j}, \lambda^{k_j})\}$, focusing on the inequality constraints and the multiplier update involving ReLU.

\textbf{Case A: Constraint strictly inactive at the limit ($g_j(x, \bar{y}) < 0$)}
Since $y^{k_j} \to \bar{y}$ and $g_j$ is continuous, for $j$ sufficiently large, $g_j(x, y^{k_j+1}) < -\delta$ for some $\delta > 0$.
Consequently, $\ReLU(g_j(x, y^{k_j+1})) = 0$ for large $j$.
The multiplier update rule becomes:
$$ \mu^{k_j+1}_j = \max\{0, \mu^{k_j}_j + \rho^{k_j} \cdot 0\} = \max\{0, \mu^{k_j}_j\}. $$
Since $\mu^k_j \ge 0$ is maintained by the $\max\{0, \cdot\}$ operation, this simplifies to $\mu^{k_j+1}_j = \mu^{k_j}_j$ for large $j$.
As the subsequence $\mu^{k_j}_j$ converges to $\bar{\mu}_j$, this implies that $\mu^{k_j}_j$ must eventually become constant. If the multiplier update effectively reduces the multiplier when the constraint is strictly satisfied (as is typical in some ALM variants or dual ascent methods), or if $\mu^0_j = 0$, then the limit must be $\bar{\mu}_j = 0$.
Therefore, in this case, we have $g_j(x, \bar{y}) < 0$ and $\bar{\mu}_j = 0$, satisfying the complementary slackness condition $\bar{\mu}_j g_j(x, \bar{y}) = 0$.

\textbf{Case B: Constraint active at the limit ($g_j(x, \bar{y}) = 0$)}
The multiplier update is $\mu^{k_j+1}_j = \max\{0, \mu^{k_j}_j + \rho^{k_j} \ReLU(g_j(x, y^{k_j+1}))\}$.
Since $g_j(x, \bar{y}) = 0$, the value $g_j(x, y^{k_j+1})$ may oscillate around 0, be identically 0, or approach 0 from one side.
\begin{itemize}
    \item If $g_j(x, y^{k_j+1}) > 0$ occurs infinitely often along the subsequence, then $\ReLU(g_j(x, y^{k_j+1})) > 0$ infinitely often. The term $\rho^{k_j} \ReLU(g_j(x, y^{k_j+1}))$ is positive. This term adds non-negative increments to $\mu^{k_j}_j$. Since $\mu^{k_j}_j$ converges to $\bar{\mu}_j$, the limit must satisfy $\bar{\mu}_j \geq 0$. It can be strictly positive if the violations $g_j > 0$ persist sufficiently.
    \item If $g_j(x, y^{k_j+1}) \leq 0$ for all sufficiently large $j$, then $\ReLU(g_j(x, y^{k_j+1})) = 0$. As in Case A, the update becomes $\mu^{k_j+1}_j = \max\{0, \mu^{k_j}_j\}$, implying $\bar{\mu}_j \geq 0$.
\end{itemize}
In all sub-cases compatible with $g_j(x, \bar{y}) = 0$, we have $\bar{\mu}_j \geq 0$.
Therefore, $g_j(x, \bar{y}) = 0$ and $\bar{\mu}_j \geq 0$, which satisfies complementary slackness $\bar{\mu}_j g_j(x, \bar{y}) = 0$.

\textbf{Case C: Constraint violated at the limit ($g_j(x, \bar{y}) > 0$)}
By continuity, for $j$ sufficiently large, $g_j(x, y^{k_j+1}) > \delta > 0$.
Consequently, $\ReLU(g_j(x, y^{k_j+1})) = g_j(x, y^{k_j+1}) > \delta$.
The multiplier update involves adding the term $\rho^{k_j} g_j(x, y^{k_j+1}) > \rho^{k_j} \delta$.
If such violations persist, standard ALM protocols require $\rho^k \to \infty$ to enforce feasibility. Even if $\rho^k$ were bounded by some large $\bar{\rho}$, the increments $\rho^{k_j} g_j(x, y^{k_j+1})$ would be substantially positive. This would cause $\mu^{k_j}_j$ to increase indefinitely:
$$ \mu^{k_j+1}_j \geq \mu^{k_j}_j + \rho^{k_j} \delta \to \infty \quad (\text{if } \rho^{k_j} \to \infty \text{ or } \rho^{k_j} \ge \rho_{min} > 0). $$
This contradicts the boundedness of the multiplier sequence $\{\mu^k\}$ established in Step 2 of the proof for Theorem \ref{thm:inexact_alm_app}.
Therefore, the case $g_j(x, \bar{y}) > 0$ cannot occur at a limit point.

\textbf{Summary of Feasibility and Complementarity}:
The analysis of Cases A, B, and C demonstrates that any limit point $(\bar{y}, \bar{\mu}, \bar{\lambda})$ must satisfy:
\begin{itemize}
    \item Primal feasibility for inequality constraints: $g_j(x, \bar{y}) \leq 0$ for all $j$.
    \item Dual feasibility for inequality multipliers: $\bar{\mu}_j \geq 0$ for all $j$.
    \item Complementary slackness for inequality constraints: $\bar{\mu}_j g_j(x, \bar{y}) = 0$ for all $j$.
\end{itemize}
Similar arguments involving the penalty term $\frac{\rho^k}{2} \|h(x, y)\|^2$ and the multiplier update $\lambda^{k+1}_i = \lambda^k_i + \rho^k h_i(x, y^{k+1})$ ensure primal feasibility for equality constraints, $h(x, \bar{y}) = 0$, at the limit point.

\textbf{Stationarity Condition Recovery}: The final step is to verify the KKT stationarity condition. As established, the approximate optimality implies $\|\nabla_y L_{\rho^{k_j}}(y^{k_j+1}, \mu^{k_j}, \lambda^{k_j}) \| \to 0$. Taking the limit requires careful consideration of the gradient expression and the multiplier updates. Standard ALM convergence proofs show that this limiting condition, combined with the convergence of iterates and multipliers, and the satisfaction of feasibility and complementarity, precisely yields the KKT stationarity condition:
$$
\nabla_y f(x, \bar{y}) + \sum_{j=1}^p \bar{\mu}_j \nabla_y g_j(x, \bar{y}) + \sum_{i=1}^q \bar{\lambda}_i \nabla_y h_i(x, \bar{y}) = 0.
$$
The presence of ReLU in the multiplier update rule (if used) does not alter the final KKT conditions obtained in the limit, as shown by the case analysis above confirming standard complementarity. The gradient condition in the limit effectively filters through the multiplier updates to recover the standard stationarity requirement.

Therefore, any limit point $(\bar{y}, \bar{\mu}, \bar{\lambda})$ satisfies all KKT conditions for the original problem.

\subsection{Proofs for Effectiveness Analysis of ICNN Approximation}
\label{app:effectiveness_proofs}

This section provides proofs related to the properties of the optimal solution mapping $y^*(x)$ and the effectiveness of ICNNs in approximating it.

\subsubsection{Continuity of Optimal Solution Mappings}

We analyze the continuity properties of the optimal value function and the optimal solution mapping using Berge's Maximum Theorem.

\begin{theorem}[Continuity Properties from Berge's Theorem]
\label{thm:berge_corollary_app}
Consider the parametric optimization problem:
\begin{align*}
v(x) &= \min_{y \in Y(x)} f(x,y) \\
S(x) &= \argmin_{y \in Y(x)} f(x,y)
\end{align*}
where $Y(x) = \{y \in \mathcal{Y} : g(x,y) \leq 0, h(x,y) = 0\}$. Assume:
\begin{enumerate}
\item The objective function $f: X \times \mathcal{Y} \to \mathbb{R}$ is continuous.
\item The feasible set correspondence $Y: X \rightrightarrows \mathcal{Y}$ is continuous (i.e., both upper semicontinuous and lower semicontinuous) and has non-empty, compact values for all $x \in X$. (Note: The original text required only upper semicontinuity of $Y$ for upper semicontinuity of $S(x)$ and $v(x)$.)
\end{enumerate}
Then, the optimal value function $v(x)$ is continuous on $X$, and the optimal solution mapping $S(x)$ is upper semicontinuous.
\end{theorem}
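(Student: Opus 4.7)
The plan is to recognize this statement as essentially a direct corollary of Berge's Maximum Theorem applied to our parametric setup, with the only bookkeeping being the switch from maximization to minimization. Berge's theorem, in its standard formulation, asserts that if $\varphi: X \times \mathcal{Y} \to \mathbb{R}$ is continuous and $Y: X \rightrightarrows \mathcal{Y}$ is a continuous correspondence (i.e., both upper and lower semicontinuous) with non-empty compact values, then the value function $x \mapsto \max_{y \in Y(x)} \varphi(x,y)$ is continuous and the argmax correspondence is non-empty, compact-valued, and upper semicontinuous.

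The first step is to invoke Berge's theorem with $\varphi(x,y) = -f(x,y)$. Since $f$ is continuous by hypothesis (1), so is $\varphi$. The correspondence $Y(x)$ satisfies the required continuity and non-empty compactness assumptions by hypothesis (2). Thus Berge's theorem applies and yields continuity of $x \mapsto \max_{y \in Y(x)} (-f(x,y))$ together with upper semicontinuity of $\argmax_{y \in Y(x)} (-f(x,y))$.

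The second step is to translate these conclusions back to our minimization setting. Observe that
\begin{equation*}
\max_{y \in Y(x)} \bigl(-f(x,y)\bigr) = -\min_{y \in Y(x)} f(x,y) = -v(x),
\end{equation*}
and $\argmax_{y \in Y(x)} (-f(x,y)) = \argmin_{y \in Y(x)} f(x,y) = S(x)$. Continuity of $-v(x)$ immediately gives continuity of $v(x)$, while upper semicontinuity of the argmax correspondence is exactly upper semicontinuity of $S(x)$. Non-emptiness of $S(x)$ for each $x$ follows from the Weierstrass extreme value theorem applied to the continuous function $f(x,\cdot)$ on the non-empty compact set $Y(x)$, which also justifies writing $\min$ rather than $\inf$.

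The main conceptual obstacle, to the extent there is one, lies not in the proof itself but in verifying that the hypotheses are the right ones for the downstream use of the theorem: in particular, that the feasible-set correspondence $Y(x)$ defined by $g(x,y) \le 0$ and $h(x,y) = 0$ is genuinely continuous. Upper semicontinuity of $Y$ follows from continuity of $g,h$ together with compactness of $\mathcal{Y}$ by a standard closed-graph argument, but lower semicontinuity is delicate and typically requires a constraint qualification such as Slater's condition or MFCQ (Assumption~\ref{assum:constraint_qual_app}) to rule out degenerate collapse of the feasible set under small parameter perturbations. Since the theorem statement already assumes continuity of $Y$, this subtlety is absorbed into hypothesis (2), and the proof reduces to the routine application sketched above. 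I would remark on this in the proof text so that the reader sees why Assumption~\ref{assum:constraint_qual_app} is the natural sufficient condition ensuring the hypotheses of the theorem in our constrained setting.
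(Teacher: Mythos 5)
Your proposal is correct and follows essentially the same route as the paper: the paper's proof is explicitly ``(Following Berge's Maximum Theorem and standard derivations)'' and simply unfolds the standard Weierstrass-existence, upper/lower-semicontinuity-of-$v$, and closed-graph arguments that constitute the proof of Berge's theorem, whereas you invoke the theorem as a black box via the sign flip $\varphi = -f$. Your closing remark on why lower semicontinuity of $Y(x)$ is the delicate hypothesis (and its connection to constraint qualifications) is a worthwhile addition not present in the paper's version.
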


\begin{proof} (Following Berge's Maximum Theorem and standard derivations)

\textbf{1. Existence of Optimal Solution:}
For any fixed $x \in X$, the feasible set $Y(x)$ is assumed to be non-empty and compact. Since $f(x, \cdot)$ is continuous on the compact set $Y(x)$ (by Assumption 1), the Weierstrass Extreme Value Theorem guarantees that $f$ attains its minimum on $Y(x)$. Therefore, the optimal solution set $S(x)$ is non-empty, and the optimal value function $v(x)$ is well-defined for all $x \in X$.

\textbf{2. Continuity of the Optimal Value Function $v(x)$:}
Berge's Maximum Theorem directly states that if $f$ is continuous and the correspondence $Y$ is continuous with compact values, then the value function $v(x) = \min_{y \in Y(x)} f(x,y)$ (or $\max$) is continuous. We briefly outline the standard proof structure for completeness, proving upper and lower semicontinuity separately.

\textit{(a) Upper Semicontinuity of $v(x)$:}
Let $x_n \to x$ in $X$. Let $y^* \in S(x)$, so $v(x) = f(x, y^*)$. Since $Y$ is lower semicontinuous at $x$, there exists a sequence $\{y_n\}$ such that $y_n \in Y(x_n)$ for each $n$ and $y_n \to y^*$. By definition of $v(x_n)$, we have $v(x_n) \leq f(x_n, y_n)$. Using the continuity of $f$, we take the limit superior:
$$
\limsup_{n\to\infty} v(x_n) \leq \lim_{n\to\infty} f(x_n, y_n) = f(\lim_{n\to\infty} x_n, \lim_{n\to\infty} y_n) = f(x, y^*) = v(x).
$$
This establishes the upper semicontinuity of $v(x)$.

\textit{(b) Lower Semicontinuity of $v(x)$:}
Let $x_n \to x$ in $X$. For each $n$, let $y_n \in S(x_n)$, meaning $v(x_n) = f(x_n, y_n)$. Since $Y$ is compact-valued and typically resides within a larger compact set $\mathcal{Y}$, the sequence $\{y_n\}$ is contained in a compact set. By Bolzano-Weierstrass, there exists a subsequence $\{y_{n_k}\}$ that converges to some limit $\bar{y}$. Since $Y$ is upper semicontinuous, the limit of a sequence from $Y(x_{n_k})$ must lie in $Y(x)$, so $\bar{y} \in Y(x)$. Now consider the limit inferior along this subsequence:
$$
\liminf_{k\to\infty} v(x_{n_k}) = \lim_{k\to\infty} f(x_{n_k}, y_{n_k}) = f(\lim_{k\to\infty} x_{n_k}, \lim_{k\to\infty} y_{n_k}) = f(x, \bar{y}).
$$
Since $\bar{y} \in Y(x)$, by the definition of $v(x)$, we have $f(x, \bar{y}) \geq v(x)$. Thus, $\liminf_{k\to\infty} v(x_{n_k}) \geq v(x)$. As this holds for any convergent subsequence, it implies $\liminf_{n\to\infty} v(x_n) \geq v(x)$. This establishes the lower semicontinuity of $v(x)$.

Combining upper and lower semicontinuity, $v(x)$ is continuous.

\textbf{3. Upper Semicontinuity of the Optimal Solution Mapping $S(x)$:}
Berge's theorem also states that under the same conditions (continuous $f$, continuous compact-valued $Y$), the solution mapping $S(x)$ is upper semicontinuous.
To show this directly: let $x_n \to x$ and let $y_n \in S(x_n)$ be a sequence such that $y_n \to y$. We need to show that $y \in S(x)$.
First, since $y_n \in Y(x_n)$ and $Y$ is upper semicontinuous (closed graph property for compact-valued case), the limit point $y$ must belong to the limit set $Y(x)$, i.e., $y \in Y(x)$.
Second, since $y_n \in S(x_n)$, we have $f(x_n, y_n) = v(x_n)$. Taking the limit as $n \to \infty$ and using the continuity of $f$ and the continuity of $v(x)$ (proven above):
$$
f(x, y) = f(\lim x_n, \lim y_n) = \lim f(x_n, y_n) = \lim v(x_n) = v(x).
$$
This equation shows that $y$ achieves the minimum value $v(x)$ while being feasible ($y \in Y(x)$). Therefore, by definition, $y \in S(x)$. This confirms that $S(x)$ is upper semicontinuous.
\end{proof}

\subsubsection{Analysis of ICNN Approximation Effectiveness}

We now analyze the suitability of ICNNs for approximating the optimal solution mapping $y^*(x)$, considering cases where the mapping might be convex or non-convex.

\begin{theorem}[Properties of Solution Mapping for Linear-Strongly Convex Problems]
\label{thm:convex_mapping_app}
Consider the parametric optimization problem:
$$ y^*(x) = \argmin_{y\in Y} \Bigl\{a(x)^T y + \phi(y)\Bigr\} $$
where $Y \subseteq \mathbb{R}^d$ is a closed convex set, $\phi(y)$ is an $m$-strongly convex function on $Y$ ($m>0$), and the linear term depends affinely on the parameter $x \in \mathbb{R}^p$, i.e., $a(x) = Ax + b$ for some matrix $A \in \mathbb{R}^{d \times p}$ and vector $b \in \mathbb{R}^d$.
The optimal solution mapping $x \mapsto y^*(x)$ is single-valued and Lipschitz continuous on the domain of $x$. Furthermore:
\begin{enumerate}
    \item If $Y = \mathbb{R}^d$ (unconstrained) and $\phi(y)$ is twice continuously differentiable, then $y^*(x)$ is continuously differentiable.
    \item If $Y = \mathbb{R}^d$ and $\phi(y) = \frac{1}{2} y^T Q y + c^T y + d$ is a quadratic function with $Q \succ 0$ (positive definite), then $y^*(x)$ is an affine function of $x$, and hence both convex and concave.
\end{enumerate}
\end{theorem}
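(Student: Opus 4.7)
The plan is to prove the three claims in sequence, exploiting the strong convexity of the inner objective $F(x,y) := a(x)^T y + \phi(y)$ in $y$, noting that $F$ inherits $m$-strong convexity in $y$ since the term $a(x)^T y$ is affine and therefore does not change the convexity modulus. First, for any fixed $x$, $F(x,\cdot)$ is proper, continuous, and $m$-strongly convex on the closed convex set $Y$, and as $\|y\|\to\infty$ it grows without bound, so the Weierstrass-type argument combined with strict convexity gives existence and uniqueness of $y^*(x)$. This establishes that $x\mapsto y^*(x)$ is single-valued.

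For Lipschitz continuity, I would use the variational (first-order) optimality inequality on the convex constraint set $Y$. For any $x_1,x_2$ and the corresponding minimizers $y_i^* = y^*(x_i)$, the inequalities
\begin{equation*}
\langle a(x_1) + \nabla\phi(y_1^*),\, y_2^* - y_1^*\rangle \ge 0,\qquad \langle a(x_2) + \nabla\phi(y_2^*),\, y_1^* - y_2^*\rangle \ge 0
\end{equation*}
hold. Adding them and rearranging yields
\begin{equation*}
\langle \nabla\phi(y_1^*) - \nabla\phi(y_2^*),\, y_1^* - y_2^*\rangle \le \langle a(x_2) - a(x_1),\, y_1^* - y_2^*\rangle .
\end{equation*}
The left-hand side is at least $m\|y_1^* - y_2^*\|^2$ by strong monotonicity of $\nabla\phi$, and the right-hand side is bounded above by $\|A\|\,\|x_1-x_2\|\,\|y_1^*-y_2^*\|$ using $a(x)=Ax+b$ and Cauchy–Schwarz. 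Dividing by $\|y_1^*-y_2^*\|$ when nonzero gives the Lipschitz bound $\|y^*(x_1)-y^*(x_2)\|\le \tfrac{\|A\|}{m}\|x_1-x_2\|$.

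For claim (1), with $Y=\mathbb{R}^d$ the optimality condition is the equation $\Phi(x,y):=a(x)+\nabla\phi(y)=0$. Since $\phi\in C^2$ is $m$-strongly convex, $\nabla_y \Phi(x,y^*(x)) = \nabla^2\phi(y^*(x))\succeq mI$ is nonsingular, so the Implicit Function Theorem applied to $\Phi=0$ gives that $y^*(x)$ is continuously differentiable, with the explicit Jacobian $\nabla y^*(x) = -[\nabla^2\phi(y^*(x))]^{-1} A$. For claim (2), plugging the quadratic $\phi(y)=\tfrac12 y^TQy+c^Ty+d$ into the same stationarity equation yields $Ax+b+Qy^*+c=0$, whence $y^*(x) = -Q^{-1}(Ax+b+c)$, which is affine in $x$ and therefore simultaneously convex and concave.

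I do not expect a serious obstacle here; the main care point is making sure that the variational inequality argument for Lipschitzness is stated correctly on the closed convex set $Y$ (rather than assuming interior optimality or differentiability of the indicator) and that strong monotonicity of $\nabla\phi$ is invoked with the correct constant $m$; the IFT step for (1) and the direct algebra for (2) are routine once the constraint is dropped.
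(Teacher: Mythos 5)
Your proposal is correct and follows essentially the same route as the paper's proof: the variational-inequality argument combined with strong monotonicity of $\nabla\phi$ for the Lipschitz bound $\|A\|/m$, the Implicit Function Theorem with the positive-definite Hessian for claim (1), and direct solution of the stationarity equation for claim (2). The only (welcome) refinement is that you justify existence of the minimizer via coercivity induced by strong convexity, where the paper merely assumes it.
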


\begin{proof}
\textbf{Uniqueness and Single-Valuedness:} For any fixed $x$, the objective function $F_x(y) = (Ax+b)^T y + \phi(y)$ is strongly convex in $y$, because $\phi(y)$ is strongly convex and the term $(Ax+b)^T y$ is linear in $y$. A strongly convex function minimized over a closed convex set $Y$ has at most one minimizer. Existence is guaranteed if $\phi$ is coercive or $Y$ is compact (or under weaker conditions ensuring the minimum is attained). Assuming existence, the minimizer $y^*(x)$ is unique, making the mapping $x \mapsto y^*(x)$ single-valued.

\textbf{Lipschitz Continuity:} Let $x_1, x_2$ be two parameter values, and let $y_1 = y^*(x_1)$, $y_2 = y^*(x_2)$. The first-order condition for optimality (expressed using variational inequality for the general convex set $Y$) states that $y^*$ is optimal if and only if $(a(x) + \nabla \phi(y^*))^T (y - y^*) \geq 0$ for all $y \in Y$.
Applying this for $y_1$ and $y_2$:
\begin{align*}
(a(x_1) + \nabla \phi(y_1))^T (y_2 - y_1) &\geq 0 \\
(a(x_2) + \nabla \phi(y_2))^T (y_1 - y_2) &\geq 0
\end{align*}
Adding these two inequalities gives:
$$ (a(x_1) - a(x_2) + \nabla \phi(y_1) - \nabla \phi(y_2))^T (y_2 - y_1) \geq 0 $$
$$ \implies (a(x_1) - a(x_2))^T (y_2 - y_1) \geq (\nabla \phi(y_1) - \nabla \phi(y_2))^T (y_1 - y_2) $$
Since $\phi$ is $m$-strongly convex, $(\nabla \phi(y_1) - \nabla \phi(y_2))^T (y_1 - y_2) \geq m \|y_1 - y_2\|^2_2$. Substituting this:
$$ (a(x_1) - a(x_2))^T (y_2 - y_1) \geq m \|y_1 - y_2\|^2_2 $$
Using Cauchy-Schwarz inequality on the left side: $\|a(x_1) - a(x_2)\|_2 \|y_1 - y_2\|_2 \geq m \|y_1 - y_2\|^2_2$.
If $y_1 \neq y_2$, we divide by $\|y_1 - y_2\|_2$:
$$ \|y_1 - y_2\|_2 \leq \frac{1}{m} \|a(x_1) - a(x_2)\|_2 $$
Since $a(x) = Ax + b$, we have $a(x_1) - a(x_2) = A(x_1 - x_2)$. Thus, $\|a(x_1) - a(x_2)\|_2 = \|A(x_1 - x_2)\|_2 \leq \|A\|_{op} \|x_1 - x_2\|_2$, where $\|A\|_{op}$ is the operator norm induced by the $\ell_2$-norm.
Therefore,
$$ \|y^*(x_1) - y^*(x_2)\|_2 \leq \frac{\|A\|_{op}}{m} \|x_1 - x_2\|_2 $$
This demonstrates that $y^*(x)$ is Lipschitz continuous with constant $L = \|A\|_{op}/m$.

\textbf{Differentiability (Unconstrained, $C^2$ Case):} If $Y = \mathbb{R}^d$ and $\phi$ is $C^2$, the optimality condition is simply the gradient being zero: $\nabla_y F_x(y^*(x)) = 0$.
$$ A x + b + \nabla \phi(y^*(x)) = 0 $$
Define $G(x, y) = Ax + b + \nabla \phi(y)$. We have $G(x, y^*(x)) = 0$. We compute the partial Jacobians:
$$ \frac{\partial G}{\partial y}(x, y) = \nabla^2 \phi(y) $$
$$ \frac{\partial G}{\partial x}(x, y) = A $$
Since $\phi$ is $m$-strongly convex, its Hessian $\nabla^2 \phi(y)$ is positive definite (eigenvalues $\ge m > 0$) and therefore invertible. By the Implicit Function Theorem, the function $y^*(x)$ defined by $G(x, y^*(x))=0$ is continuously differentiable with respect to $x$, and its Jacobian is:
$$ \frac{d y^*}{d x}(x) = - \left[ \frac{\partial G}{\partial y}(x, y^*(x)) \right]^{-1} \frac{\partial G}{\partial x}(x, y^*(x)) = - [\nabla^2 \phi(y^*(x))]^{-1} A $$

\textbf{Affine Case (Unconstrained, Quadratic $\phi$):}
If $\phi(y) = \frac{1}{2} y^T Q y + c^T y + d$ with $Q \succ 0$, then $\nabla \phi(y) = Qy + c$. The optimality condition $a(x) + \nabla \phi(y^*(x)) = 0$ becomes:
$$ (Ax + b) + (Q y^*(x) + c) = 0 $$
$$ Q y^*(x) = -Ax - (b+c) $$
Since $Q$ is invertible ($Q \succ 0$), we can solve for $y^*(x)$:
$$ y^*(x) = -Q^{-1} A x - Q^{-1}(b+c) $$
This explicitly shows that $y^*(x)$ is an affine function of $x$. An affine function is both convex and concave.

In summary, the optimal solution mapping $y^*(x)$ exhibits desirable regularity properties (Lipschitz continuity) under general strong convexity, and becomes differentiable or even affine under stronger assumptions on $\phi$ and the feasible set. Convexity of the mapping $y^*(x)$ itself is guaranteed in the specific case where $\phi$ is quadratic (and $Y=\mathbb{R}^d$).
\end{proof}

\begin{theorem}[ICNN Training as Best Convex Approximation]
\label{thm:icnn_convex_regression_app}
Consider a dataset of parameter-solution pairs $\{(x_i, y_i^*)\}_{i=1}^N$, where $y_i^* = y^*(x_i)$ originates from an optimal solution mapping $x \mapsto y^*(x)$, which may or may not be convex. Let $\mathcal{F}_{\text{convex}}$ be the space of convex functions mapping from the domain of $x$ to the space of $y$. The problem of finding the best convex approximation to the data in the least-squares sense is:
\begin{equation} \label{eq:convex_regr}
\min_{f \in \mathcal{F}_{\text{convex}}} L(f) = \frac{1}{N} \sum_{i=1}^N \| f(x_i) - y_i^* \|^2
\end{equation}
Let $f^*_{\text{convex}}$ be a solution to this problem, and let $L^* = L(f^*_{\text{convex}})$.
If the class of functions representable by ICNNs, $\mathcal{F}_{\text{ICNN}} \subset \mathcal{F}_{\text{convex}}$, is sufficiently rich (e.g., dense in the space of continuous convex functions on compact sets under uniform norm), then training an ICNN $f_\theta$ by minimizing the empirical loss $L(f_\theta)$ can approximate the optimal convex regression solution arbitrarily well. Specifically, for any $\delta > 0$, there exists an ICNN $f_{\theta^*}$ such that:
$$ L(f_{\theta^*}) \leq L^* + \delta $$
\end{theorem}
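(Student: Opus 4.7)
The plan is to exploit the density hypothesis for $\mathcal{F}_{\text{ICNN}} \subset \mathcal{F}_{\text{convex}}$ to find an ICNN whose values at the training inputs track those of the optimal convex regressor $f^*_{\text{convex}}$, and then convert that pointwise closeness into closeness of the empirical squared loss by a one-line perturbation estimate. Since the loss is evaluated only at finitely many points, it suffices to approximate $f^*_{\text{convex}}$ uniformly on a compact set containing $x_1, \ldots, x_N$.

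First I would fix $\delta > 0$ and set $C := \max_i \|f^*_{\text{convex}}(x_i) - y_i^*\|$, which is finite because $L^*$ is a finite sum of squared norms. Choosing a compact set $K$ containing $\{x_1, \ldots, x_N\}$ (for instance a closed ball large enough that every $x_i$ lies in its interior, where convex functions are automatically continuous), the density hypothesis supplies, for each $\epsilon > 0$, ICNN parameters $\theta^*$ with
\begin{equation*}
\sup_{x \in K} \|f_{\theta^*}(x) - f^*_{\text{convex}}(x)\| \leq \epsilon.
\end{equation*}
Setting $a_i := f_{\theta^*}(x_i) - f^*_{\text{convex}}(x_i)$ and $b_i := f^*_{\text{convex}}(x_i) - y_i^*$, we then have $\|a_i\| \leq \epsilon$ and $\|b_i\| \leq C$.

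Next I would expand the loss using $f_{\theta^*}(x_i) - y_i^* = a_i + b_i$, obtaining
\begin{equation*}
L(f_{\theta^*}) = L^* + \frac{2}{N}\sum_{i=1}^N \langle a_i, b_i \rangle + \frac{1}{N}\sum_{i=1}^N \|a_i\|^2 \leq L^* + 2\epsilon C + \epsilon^2
\end{equation*}
after Cauchy--Schwarz. Picking $\epsilon \leq \min\{1, \delta/(2C+1)\}$ (or $\epsilon \leq \sqrt{\delta}$ in the degenerate case $C=0$, which corresponds to $L^*=0$) makes the excess $2\epsilon C + \epsilon^2 \leq \delta$, yielding $L(f_{\theta^*}) \leq L^* + \delta$, as desired.

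The main obstacle is the density hypothesis itself: invoking uniform approximation of arbitrary continuous convex functions on compact sets by ICNNs is a nontrivial universal approximation result and does not follow immediately from the MLP universal approximation theorem, since ICNN parameter constraints (non-negative weights on the hidden-to-hidden path, convex non-decreasing activations) restrict the hypothesis class. Once this is granted, the rest is a routine perturbation estimate. A minor technical subtlety is that $f^*_{\text{convex}}$ must actually be attained in the $\argmin$ rather than only approached; if only an infimum exists, I would first pick a $(\delta/2)$-suboptimal convex regressor and rerun the same computation with $\delta$ halved on the right-hand side, which absorbs the gap without changing the argument.
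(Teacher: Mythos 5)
Your argument is correct and follows essentially the same route as the paper's proof: both exploit the assumed density of ICNNs in the continuous convex functions to approximate $f^*_{\text{convex}}$ uniformly at the data points, then bound the resulting excess loss by a term of the form $2\epsilon C + \epsilon^2$ (you via the exact inner-product expansion and Cauchy--Schwarz with $C$ the maximum residual norm, the paper via the triangle inequality with $C$ the mean residual norm) and choose $\epsilon$ accordingly. Your closing caveats --- that the density of ICNNs in the class of convex functions is itself a nontrivial hypothesis not inherited from the standard MLP universal approximation theorem, and that attainment of the infimum defining $f^*_{\text{convex}}$ needs either justification or the $\delta/2$-suboptimal workaround --- are apt and, if anything, more careful than the paper's treatment of these points.
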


\begin{proof}
\textbf{1. Existence of Best Convex Approximation:}
Under mild conditions (e.g., domain of $x$ is compact, function space is appropriately defined), the convex regression problem \eqref{eq:convex_regr} admits at least one solution $f^*_{\text{convex}}$. This function represents the best possible fit to the data among all convex functions, minimizing the mean squared error $L^*$.

\textbf{2. Relationship between ICNNs and Convex Functions:}
By design, any function $f_\theta$ generated by an ICNN architecture is convex with respect to its input $x$. Therefore, the set of functions representable by ICNNs, $\mathcal{F}_{\text{ICNN}}$, is a subset of the set of all convex functions, $\mathcal{F}_{\text{convex}}$.

\textbf{3. Lower Bound on ICNN Loss:}
When training an ICNN $f_\theta$ to minimize the loss $L(f_\theta)$, we are optimizing over the set $\mathcal{F}_{\text{ICNN}}$. Since $\mathcal{F}_{\text{ICNN}} \subseteq \mathcal{F}_{\text{convex}}$, the minimum loss achievable by an ICNN cannot be lower than the minimum loss achievable over the larger set of all convex functions:
$$ \inf_{f_\theta \in \mathcal{F}_{\text{ICNN}}} L(f_\theta) \geq \inf_{f \in \mathcal{F}_{\text{convex}}} L(f) = L^* $$

\textbf{4. Approximating the Best Convex Solution with ICNNs:}
The key assumption is the universal approximation capability of ICNNs within the space of convex functions. Assuming the domain $X$ containing $\{x_i\}$ is compact, this means that for the optimal convex solution $f^*_{\text{convex}}$ (which is continuous on $X$), and for any $\epsilon' > 0$, there exists an ICNN $f_{\theta'}$ such that it is uniformly close to $f^*_{\text{convex}}$:
$$ \sup_{x \in X} \| f_{\theta'}(x) - f^*_{\text{convex}}(x) \| < \epsilon' $$
This implies pointwise closeness at the data points: $\| f_{\theta'}(x_i) - f^*_{\text{convex}}(x_i) \| < \epsilon'$ for all $i=1, \dots, N$.

Now, let's analyze the loss $L(f_{\theta'})$ achieved by this approximating ICNN:
\begin{align*}
L(f_{\theta'}) &= \frac{1}{N} \sum_{i=1}^N \| f_{\theta'}(x_i) - y_i^* \|^2 \\
&= \frac{1}{N} \sum_{i=1}^N \| (f_{\theta'}(x_i) - f^*_{\text{convex}}(x_i)) + (f^*_{\text{convex}}(x_i) - y_i^*) \|^2 \\
&\leq \frac{1}{N} \sum_{i=1}^N \left( \| f_{\theta'}(x_i) - f^*_{\text{convex}}(x_i) \| + \| f^*_{\text{convex}}(x_i) - y_i^* \| \right)^2 \quad (\text{Triangle Inequality}) \\
&< \frac{1}{N} \sum_{i=1}^N \left( \epsilon' + \| f^*_{\text{convex}}(x_i) - y_i^* \| \right)^2 \\
&= \frac{1}{N} \sum_{i=1}^N \left( (\epsilon')^2 + 2 \epsilon' \| f^*_{\text{convex}}(x_i) - y_i^* \| + \| f^*_{\text{convex}}(x_i) - y_i^* \|^2 \right) \\
&= \frac{1}{N} \sum_{i=1}^N \| f^*_{\text{convex}}(x_i) - y_i^* \|^2 + \frac{N(\epsilon')^2}{N} + \frac{2 \epsilon'}{N} \sum_{i=1}^N \| f^*_{\text{convex}}(x_i) - y_i^* \| \\
&= L^* + (\epsilon')^2 + \frac{2 \epsilon'}{N} \sum_{i=1}^N \| f^*_{\text{convex}}(x_i) - y_i^* \|
\end{align*}
Let $C = \frac{1}{N} \sum_{i=1}^N \| f^*_{\text{convex}}(x_i) - y_i^* \|$. This term $C$ is bounded if the target values $y_i^*$ are bounded. Then:
$$ L(f_{\theta'}) < L^* + (\epsilon')^2 + 2 \epsilon' C $$
Given any desired tolerance $\delta > 0$, we can choose $\epsilon' > 0$ sufficiently small such that $(\epsilon')^2 + 2 \epsilon' C < \delta$. For instance, choose $\epsilon' < \min\{ \sqrt{\delta/2}, \delta/(4C) \}$ (if $C=0$, choose $\epsilon' < \sqrt{\delta}$).
For such a choice of $\epsilon'$, there exists an ICNN $f_{\theta'}$ satisfying $L(f_{\theta'}) < L^* + \delta$.

\textbf{Conclusion:}
We have shown that $\inf_{f_\theta \in \mathcal{F}_{\text{ICNN}}} L(f_\theta) \geq L^*$ and that for any $\delta > 0$, there exists an $f_{\theta'}$ such that $L(f_{\theta'}) < L^* + \delta$. This implies that
$$ \inf_{f_\theta \in \mathcal{F}_{\text{ICNN}}} L(f_\theta) = L^* $$
Therefore, the minimum loss achievable by training an ICNN (assuming perfect optimization finds the infimum) is equal to the loss of the best possible convex approximation $f^*_{\text{convex}}$. In practice, an optimized ICNN $f_{\theta^*}$ will achieve a loss $L(f_{\theta^*})$ close to this infimum, meaning $L(f_{\theta^*}) \approx L^*$, or more formally $L(f_{\theta^*}) \leq L^* + \delta$ where $\delta$ reflects optimization error and the network's capacity relative to the chosen $\epsilon'$. This confirms that training an ICNN effectively performs convex regression, finding the best convex function within its representational capacity to fit the potentially non-convex data.
\end{proof}

\section{Experimental Details}
\label{sec:appendix_exp_details}

This appendix provides further details on the experimental setup and supplementary results.

\subsection{Experimental Setup}
\label{subsec:exp_setup}

\paragraph{Network Architecture and Training.}
For all experimental cases, the base network is an Input Convex Neural Network (ICNN) featuring 2 hidden layers, each with 500 units. ReLU activation functions follow the hidden layers. The architecture incorporates 2 passthrough layers. To maintain focus on the core methodology, Dropout and Batch Normalization (BN) layers were omitted. For the DC3 baseline method, the maximum number of training iterations was set to 1000 across all cases, following standard practice for that method.

\paragraph{Task-Specific Settings.}
\begin{itemize}
    \item \textbf{QP cases:} For our proposed AIC model, the training involved 5 outer iterations (multiplier/penalty updates) and 500 inner iterations (network updates per outer loop).
    \item \textbf{AC-OPF cases:} The physical nature of AC Optimal Power Flow problems imposes boundary constraints on variables like AC power generation and voltage magnitudes, typically normalized to the range $[0, 1]$. To accommodate this, a sigmoid activation function was appended to the final layer of the ICNN for these tasks. For the AIC model in AC-OPF, training used 5 outer iterations and 250 inner iterations.
\end{itemize}

\paragraph{Hyperparameters.}
Across all tasks and methods, a batch size of 200 was used. Model parameters were optimized using the Adam optimizer. For the DC3 baseline, we adhered to the hyperparameter settings recommended in the original publication \cite{donti2021dc3}. For our proposed method, AIC, and its variants (ALN, ALN w/ grad, ALN w/ cvx), hyperparameter tuning referenced the settings used for the PDL baseline, exploring the ranges specified in Table~\ref{tab:hyperparams_qp} and Table~\ref{tab:hyperparams_acopf}. The listed parameters likely correspond to: $lr$ (learning rate), $corrLr$ (learning rate for correction steps, if applicable), $\tau$ (ALM tolerance factor), $\rho$ (initial ALM penalty parameter), $\rho_{\max}$ (maximum ALM penalty parameter), $\alpha$ (ALM penalty growth factor), and $\lambda_g + \lambda_h$ (potentially weights for constraint violation terms in the loss, or initial multiplier magnitudes).

\begin{table}[h!]
\centering
\scriptsize
\caption{Hyperparameter search ranges for QP experiments.}
\label{tab:hyperparams_qp}
\resizebox{\textwidth}{!}{%
\begin{tabular}{@{}lllll@{}}
\toprule
Parameter          & ALN                          & ALN w/ grad                  & ALN w/ cvx                   & AIC                          \\ \midrule
$lr$               & $10^{-2}, 10^{-3}, 10^{-4}$        & $10^{-2}, 10^{-3}, 10^{-4}$        & $10^{-2}, 10^{-3}, 10^{-4}$        & $10^{-2}, 10^{-3}, 10^{-4}$        \\
$corrLr$           & $10^{-3}, 10^{-4}, 10^{-5}, 10^{-6}$ & $10^{-3}, 10^{-4}, 10^{-5}, 10^{-6}$ & $10^{-3}, 10^{-4}, 10^{-5}, 10^{-6}$ & $10^{-3}, 10^{-4}, 10^{-5}, 10^{-6}$ \\
$\tau$             & $0.5, 0.6, 0.7, 0.8, 0.9$        & $0.5, 0.6, 0.7, 0.8, 0.9$        & $0.5, 0.6, 0.7, 0.8, 0.9$        & $0.5, 0.6, 0.7, 0.8, 0.9$        \\
$\rho$             & $0.5, 1, 2, 5, 10$               & $0.5, 1, 2, 5, 10$               & $0.5, 1, 2, 5, 10$               & $0.5, 1, 2, 5, 10$               \\
$\rho_{\max}$      & $500, 1000, 2000, 5000$          & $500, 1000, 2000, 5000$          & $500, 1000, 2000, 5000$          & $500, 1000, 2000, 5000$          \\
$\alpha$           & $1, 1.5, 2, 5, 10$               & $1, 1.5, 2, 5, 10$               & $1, 1.5, 2, 5, 10$               & $1, 1.5, 2, 5, 10$               \\
$\lambda_g + \lambda_h$ & $1, 2, 4, 20, 100$               & $1, 2, 4, 20, 100$               & $1, 2, 4, 20, 100$               & $1, 2, 4, 20, 100$               \\ \bottomrule
\end{tabular}
}
\end{table}

\begin{table}[h!]
\centering
\scriptsize
\caption{Hyperparameter search ranges for AC-OPF experiments (Case 57 and Case 118).}
\label{tab:hyperparams_acopf}
\resizebox{\textwidth}{!}{%
\begin{tabular}{@{}lllll@{}}
\toprule
Parameter          & ALN                          & ALN w/ grad                  & ALN w/ cvx                   & AIC                          \\ \midrule
$lr$               & $10^{-2}, 10^{-3}, 10^{-4}$        & $10^{-2}, 10^{-3}, 10^{-4}$        & $10^{-2}, 10^{-3}, 10^{-4}$        & $10^{-2}, 10^{-3}, 10^{-4}$        \\
$corrLr$           & $10^{-3}, 10^{-4}, 10^{-5}, 10^{-6}$ & $10^{-3}, 10^{-4}, 10^{-5}, 10^{-6}$ & $10^{-3}, 10^{-4}, 10^{-5}, 10^{-6}$ & $10^{-3}, 10^{-4}, 10^{-5}, 10^{-6}$ \\
$\tau$             & $0.5, 0.6, 0.7, 0.8, 0.9$        & $0.5, 0.6, 0.7, 0.8, 0.9$        & $0.5, 0.6, 0.7, 0.8, 0.9$        & $0.5, 0.6, 0.7, 0.8, 0.9$        \\
$\rho$             & $0.5, 1, 2, 5, 10$               & $0.5, 1, 2, 5, 10$               & $0.5, 1, 2, 5, 10$               & $0.5, 1, 2, 5, 10$               \\
$\rho_{\max}$      & $500, 1000, 2000, 5000$          & $500, 1000, 2000, 5000$          & $500, 1000, 2000, 5000$          & $500, 1000, 2000, 5000$          \\
$\alpha$           & $1, 1.5, 2, 5, 10$               & $1, 1.5, 2, 5, 10$               & $1, 1.5, 2, 5, 10$               & $1, 1.5, 2, 5, 10$               \\
$\lambda_g + \lambda_h$ & $1, 2, 4, 20, 100$               & $1, 2, 4, 20, 100$               & $1, 2, 4, 20, 100$               & $1, 2, 4, 20, 100$               \\ \bottomrule
\end{tabular}
}
\end{table}

\subsection{AC Optimal Power Flow (ACOPF) Model Formulation}
\label{subsec:acopf_model}

The AC Optimal Power Flow (ACOPF) problem aims to determine an optimal operating state for an electrical power system that minimizes generation costs while satisfying network physics and operational limits. Let the power network consist of a set of buses $\mathcal{N}$, generators $\mathcal{G} \subseteq \mathcal{N}$, and transmission lines $\mathcal{L}$.

\paragraph{Variables:} The decision variables typically include:
\begin{itemize}
    \item $V_i$: Voltage magnitude at bus $i \in \mathcal{N}$.
    \item $\theta_i$: Voltage angle at bus $i \in \mathcal{N}$. Let $V_i e^{j\theta_i}$ be the complex voltage.
    \item $P_{G_i}$: Active power generation at generator bus $i \in \mathcal{G}$.
    \item $Q_{G_i}$: Reactive power generation at generator bus $i \in \mathcal{G}$.
\end{itemize}
The collection of these variables forms the decision vector $y = (V, \theta, P_G, Q_G)$.

\paragraph{Parameters:} Problem parameters typically include:
\begin{itemize}
    \item $P_{D_i}, Q_{D_i}$: Active and reactive power demand (load) at bus $i \in \mathcal{N}$. These often constitute the parameter vector $x$ that varies between instances.
    \item $G_{ij}, B_{ij}$: Conductance and susceptance of the transmission line connecting bus $i$ and bus $j$. $Y_{ij} = G_{ij} + j B_{ij}$ is the element of the network admittance matrix.
    \item $C_i(P_{G_i})$: Cost function for generator $i \in \mathcal{G}$, often quadratic: $C_i(P_{G_i}) = c_{i2} P_{G_i}^2 + c_{i1} P_{G_i} + c_{i0}$.
    \item Operational limits: $P_{G_i}^{\min}, P_{G_i}^{\max}$, $Q_{G_i}^{\min}, Q_{G_i}^{\max}$, $V_i^{\min}, V_i^{\max}$, and thermal limits for line flows $S_{ij}^{\max}$.
\end{itemize}

\paragraph{Objective Function:} The standard objective is to minimize the total generation cost:
\begin{equation}
    \min_{y} f(y) = \sum_{i \in \mathcal{G}} C_i(P_{G_i})
\end{equation}

\paragraph{Equality Constraints (Power Flow Equations):} These enforce Kirchhoff's laws at each bus $i \in \mathcal{N}$:
\begin{align}
    P_{G_i} - P_{D_i} &= \sum_{j \in \mathcal{N}} V_i V_j (G_{ij} \cos(\theta_i - \theta_j) + B_{ij} \sin(\theta_i - \theta_j)) \label{eq:pf_p} \\
    Q_{G_i} - Q_{D_i} &= \sum_{j \in \mathcal{N}} V_i V_j (G_{ij} \sin(\theta_i - \theta_j) - B_{ij} \cos(\theta_i - \theta_j)) \label{eq:pf_q}
\end{align}
These equations represent the set $h_x(y) = 0$, where the dependence on $x$ comes from $P_{D_i}, Q_{D_i}$. Note that $P_{G_i} = 0$ and $Q_{G_i} = 0$ for non-generator buses ($i \notin \mathcal{G}$). Usually, one angle $\theta_k$ is fixed to 0 (slack bus) to remove rotational invariance.

\paragraph{Inequality Constraints (Operational Limits):} These represent the set $g_x(y) \le 0$:
\begin{align}
    P_{G_i}^{\min} \le P_{G_i} &\le P_{G_i}^{\max}, \quad \forall i \in \mathcal{G} \\
    Q_{G_i}^{\min} \le Q_{G_i} &\le Q_{G_i}^{\max}, \quad \forall i \in \mathcal{G} \\
    V_i^{\min} \le V_i &\le V_i^{\max}, \quad \forall i \in \mathcal{N} \\
    |S_{ij}(V, \theta)| &\le S_{ij}^{\max}, \quad \forall (i,j) \in \mathcal{L} \quad \text{(Line flow limits)}
\end{align}
where $S_{ij}$ is the complex power flow on the line between $i$ and $j$, calculated from $V$ and $\theta$.

The ACOPF problem is a non-convex optimization problem due to the non-linear power flow equations \eqref{eq:pf_p}-\eqref{eq:pf_q} and the potential non-convexity introduced by line flow limits when expressed in terms of voltage magnitudes and angles.

\subsection{Supplementary Results Analysis}
\label{subsec:results_analysis}

This section provides additional analysis based on the detailed results presented in the tables.

\subsubsection{QP Results}
Table~\ref{tab:results_convex_qp_detail} presents detailed results for the QP experiments across various combinations of equality ($n_{eq}$) and inequality ($n_{ineq}$) constraints, keeping the number of variables $n=100$.

\begin{table}[h!]
  \centering
  \scriptsize
  \caption{Detailed results for QP with varying $n_{eq}$ and $n_{ineq}$ ($n=100$ fixed).}
  \label{tab:results_convex_qp_detail}
  \resizebox{\textwidth}{!}{
  \begin{tabular}{@{}llcccccccc@{}}
    \toprule
    & & \multicolumn{8}{c}{Parameter Settings ($n_{eq}$/$n_{ineq}$)} \\
    \cmidrule(lr){3-10}
    Method & Metric & 10/50 & 30/50 & 70/50 & 90/50 & 50/10 & 50/30 & 50/70 & 50/90 \\
    \midrule
    Optimizer    & Obj. value & -27.26 & -23.13 & -14.80 & -4.79 & -17.34 & -16.33 & -14.61 & -14.26 \\
    (\texttt{OSQP}, \texttt{qpth}) & Max eq.    & 0.00   & 0.00   & 0.00   & 0.00  & 0.00   & 0.00   & 0.00   & 0.00 \\
                 & Max ineq.  & 0.00   & 0.00   & 0.00   & 0.00  & 0.00   & 0.00   & 0.00   & 0.00 \\
    \midrule
    ALN          & Obj. value & -27.20 & -23.09 & -14.72 & -4.74 & -17.27 & -16.24 & -14.50 & -14.13 \\
                 & Max eq.    & 0.00   & 0.01   & 0.02   & 0.03  & 0.01   & 0.01   & 0.01   & 0.01 \\
                 & Max ineq.  & 0.00   & 0.00   & 0.00   & 0.00  & 0.00   & 0.00   & 0.00   & 0.00 \\
    \midrule
    ALN w grad   & Obj. value & -27.18 & -23.09 & -14.79 & -4.78 & -17.33 & -16.31 & -14.58 & -14.22 \\
                 & Max eq.    & 0.00   & 0.00   & 0.01   & 0.02  & 0.00   & 0.00   & 0.00   & 0.00 \\
                 & Max ineq.  & 0.00   & 0.00   & 0.01   & 0.00  & 0.00   & 0.00   & 0.00   & 0.01 \\
    \midrule
    ALN w cvx    & Obj. value & -26.96 & -23.11 & -14.75 & -4.75 & -17.28 & -16.30 & -14.58 & -14.21 \\
                 & Max eq.    & 0.00   & 0.01   & 0.07   & 0.12  & 0.00   & 0.01   & 0.00   & 0.01 \\
                 & Max ineq.  & 0.00   & 0.00   & 0.01   & 0.00  & 0.00   & 0.00   & 0.00   & 0.00 \\
    \midrule
    AIC          & Obj. value & -26.87 & -23.10 & -14.78 & -4.78 & -17.32 & -16.32 & -14.58 & -14.21 \\
                 & Max eq.    & 0.00   & 0.00   & 0.02   & 0.03  & 0.00   & 0.00   & 0.00   & 0.00 \\
                 & Max ineq.  & 0.00   & 0.00   & 0.01   & 0.00  & 0.00   & 0.00   & 0.00   & 0.00 \\
    \midrule
    DC3          & Obj. value & -26.30 & -21.11 & -14.23 & -4.76 & -16.14 & -14.90 & -12.60 & -12.85 \\
                 & Max eq.    & 0.00   & 0.00   & 0.00   & 0.00  & 0.00   & 0.00   & 0.00   & 0.00 \\
                 & Max ineq.  & 0.00   & 0.00   & 0.00   & 0.00  & 0.00   & 0.00   & 0.00   & 0.00 \\
    \midrule
    PDL          & Obj. value & -26.55 & -23.06 & -14.78 & -4.80 & -17.31 & -16.30 & -14.50 & -14.17 \\
                 & Max eq.    & 0.00   & 0.01   & 0.02   & 0.03  & 0.01   & 0.00   & 0.00   & 0.01 \\
                 & Max ineq.  & 0.00   & 0.00   & 0.00   & 0.00  & 0.00   & 0.00   & 0.00   & 0.00 \\
    \bottomrule
  \end{tabular}
  } 
\end{table}

As observed from the table, the DC3 method consistently achieves perfect feasibility (Max eq. and Max ineq. violations are 0.00), satisfying the hard constraints. However, its objective values often deviate significantly from the optimal values found by the solvers (\texttt{OSQP}, \texttt{qpth}), particularly when the number of constraints changes relative to the variables (e.g., $n_{eq}=30, n_{ineq}=50$ or $n_{eq}=50, n_{ineq}=70, 90$). The PDL method shows variable performance; while often achieving near-optimal objective values, it exhibits non-negligible equality constraint violations in several settings. Notably, in the $n_{eq}=10, n_{ineq}=50$ case, PDL's objective value (-26.55) is also noticeably worse than the optimum (-27.26) and other learning methods. In contrast, the proposed AIC method and its variants (ALN, ALN w/ grad, ALN w/ cvx) generally achieve objective values very close to the optimum across all settings. Their maximum constraint violations remain low, with ALN w/ grad and AIC often showing the best feasibility among the learning methods that allow small violations, comparable to the exact solvers in many cases. AIC, in particular, demonstrates a strong balance between near-optimal objective values and low constraint violations.

\subsubsection{Nonconvex programming Results}
Table~\ref{tab:results_nonconvex_qp_detail} details the performance on the nonconvex programming benchmark.

\begin{table}[h!]
  \centering
  \scriptsize
  \caption{Detailed results for nonconvex programming with varying $n_{eq}$ and $n_{ineq}$ ($n=100$ fixed).}
  \label{tab:results_nonconvex_qp_detail}
  \resizebox{\textwidth}{!}{
  \begin{tabular}{@{}llcccccccc@{}}
    \toprule
     & & \multicolumn{8}{c}{Parameter Settings ($n_{eq}$/$n_{ineq}$)} \\
    \cmidrule(lr){3-10}
    Method & Metric & 10/50 & 30/50 & 70/50 & 90/50 & 50/10 & 50/30 & 50/70 & 50/90 \\
    \midrule
    Optimizer    & Obj. value & -18.57 & -16.37 & -9.93 & -3.66 & -11.97 & -11.83 & -11.53 & -11.42 \\
    (\texttt{IPOPT}) & Max eq.    & 0.00   & 0.00   & 0.00  & 0.00  & 0.00   & 0.00   & 0.00   & 0.00 \\
                 & Max ineq.  & 0.00   & 0.00   & 0.00  & 0.00  & 0.00   & 0.00   & 0.00   & 0.00 \\
    \midrule
    ALN          & Obj. value & -18.54 & -16.34 & -9.84 & -3.61 & -11.92 & -11.76 & -11.44 & -11.31 \\
                 & Max eq.    & 0.00   & 0.01   & 0.02  & 0.03  & 0.01   & 0.01   & 0.01   & 0.01 \\
                 & Max ineq.  & 0.00   & 0.00   & 0.00  & 0.00  & 0.00   & 0.00   & 0.00   & 0.00 \\
    \midrule
    ALN w grad   & Obj. value & -18.48 & -16.36 & -9.92 & -3.64 & -11.97 & -11.82 & -11.48 & -11.38 \\
                 & Max eq.    & 0.00   & 0.00   & 0.01  & 0.02  & 0.00   & 0.00   & 0.00   & 0.00 \\
                 & Max ineq.  & 0.00   & 0.00   & 0.00  & 0.00  & 0.00   & 0.00   & 0.00   & 0.01 \\
    \midrule
    ALN w cvx    & Obj. value & -18.55 & -16.36 & -9.91 & -3.63 & -11.97 & -11.82 & -11.50 & -11.38 \\
                 & Max eq.    & 0.00   & 0.00   & 0.04  & 0.09  & 0.01   & 0.00   & 0.00   & 0.00 \\
                 & Max ineq.  & 0.00   & 0.00   & 0.00  & 0.00  & 0.00   & 0.00   & 0.00   & 0.00 \\
    \midrule
    AIC          & Obj. value & -18.53 & -16.36 & -9.92 & -3.65 & -11.97 & -11.82 & -11.51 & -11.39 \\
                 & Max eq.    & 0.00   & 0.00   & 0.01  & 0.02  & 0.00   & 0.00   & 0.00   & 0.00 \\
                 & Max ineq.  & 0.00   & 0.00   & 0.01  & 0.00  & 0.00   & 0.00   & 0.00   & 0.00 \\
    \midrule
    DC3          & Obj. value & -17.32 & -14.33 & -9.66 & -3.62 & -9.81  & -10.22 & -9.14  & -9.89 \\
                 & Max eq.    & 0.00   & 0.00   & 0.00  & 0.00  & 0.00   & 0.00   & 0.00   & 0.00 \\
                 & Max ineq.  & 0.00   & 0.00   & 0.00  & 0.00  & 0.00   & 0.00   & 0.00   & 0.00 \\
    \bottomrule
  \end{tabular}
  } 
\end{table}

Similar to the convex case, DC3 maintains strict feasibility (Max eq. and Max ineq. are 0.00) but consistently yields objective values that are significantly worse (higher) than those obtained by the \texttt{IPOPT} solver. The proposed ALN variants and AIC again achieve objective values much closer to the solver's results while keeping constraint violations very small across the different problem configurations. This highlights the effectiveness of the proposed framework in balancing optimality and feasibility for non-convex problems compared to DC3.

\subsubsection{ACOPF Results}
Table~\ref{tab:acopf_detail} summarizes the performance on the ACOPF benchmarks Case 57 and Case 118, comparing against the \texttt{PYPOWER} solver.

\begin{table}[h!]
\centering
\scriptsize
\caption{Performance on ACOPF (case57 and case118) over 1200 instances.}
\label{tab:acopf_detail}
\resizebox{\textwidth}{!}{%
\begin{tabular}{@{}llccccc@{}}
\toprule
Task & Method & Obj. value & Max eq. & Mean eq. & Max ineq. & Time (s) \\
\midrule
\multirow{7}{*}{ACOPF-57}
& \texttt{PYPOWER} & 3.812 (0.000)          & 0.000 (0.000) & 0.000 (0.000) & 0.000 (0.000)          & 1.756 (0.105) \\
& ALN              & 3.811 (0.001)          & 0.000 (0.000) & 0.000 (0.000) & 0.020 (0.002)          & 0.347 (0.041) \\
& ALN w/ grad      & 3.811 (0.000)          & 0.000 (0.000) & 0.000 (0.000) & 0.008 (0.000)          & 0.513 (0.030) \\
& ALN w/ cvx       & 3.814 (0.000)          & 0.000 (0.000) & 0.000 (0.000) & 0.014 (0.001)          & 0.284 (0.043) \\
& AIC              & \textbf{3.812 (0.000)} & 0.000 (0.000) & 0.000 (0.000) & \textbf{0.007 (0.001)} & \textbf{0.294 (0.071)} \\
& DC3              & 3.816 (0.000)          & 0.000 (0.000) & 0.000 (0.000) & \textbf{0.002 (0.000)} & 0.502 (0.052) \\
& PDL              & --                     & 0.012 (0.000) & --            & 0.000 (0.000)          & --            \\
\midrule
\multirow{7}{*}{ACOPF-118}
& \texttt{PYPOWER} & 13.020 (0.000)         & 0.008 (0.000) & 0.000 (0.000) & 0.000 (0.000)          & 2.410 (0.100) \\
& ALN              & 13.379 (0.011)         & 0.000 (0.000) & 0.000 (0.000) & 0.164 (0.025)          & 1.007 (0.120) \\
& ALN w/ grad      & 13.403 (0.015)         & 0.001 (0.000) & 0.000 (0.000) & 0.217 (0.037)          & 1.332 (0.089) \\
& ALN w/ cvx       & 13.332 (0.016)         & 0.000 (0.000) & 0.000 (0.000) & 0.034 (0.003)          & \textbf{0.965 (0.016)} \\
& AIC              & \textbf{13.323 (0.010)}& 0.000 (0.000) & 0.000 (0.000) & \textbf{0.033 (0.003)} & 1.285 (0.072) \\
& DC3              & 13.423(?) (0.000)      & 0.000 (0.000) & 0.000 (0.000) & \textbf{0.000 (0.000)} & 1.531(?) (0.0?) \\ 
& PDL              & --                     & 0.041 (0.005) & 0.000 (0.000) & 0.001 (0.000)          & --            \\
\bottomrule
\end{tabular}
}
\end{table}

On the ACOPF benchmarks, the proposed AIC method demonstrates strong performance. For Case 57, AIC matches the objective value of the \texttt{PYPOWER} solver (3.812) while exhibiting the lowest maximum inequality constraint violation (0.007) among the learning methods and achieving the fastest inference time (0.294s). DC3 achieves perfect equality and near-perfect inequality feasibility but has a slightly worse objective (3.816). PDL shows significant equality constraint violations. For the larger Case 118, AIC achieves the best objective value (13.323) among the learning methods, very close to \texttt{PYPOWER} (13.020), and maintains excellent feasibility (Max eq. 0.000, Max ineq. 0.033). While DC3 shows perfect feasibility (assuming placeholder values are similar to Case 57), its objective value is likely less optimal based on other results. ALN w/ cvx is notably fast in this case (0.965s). Overall, AIC provides a compelling trade-off, achieving near-optimal results with good feasibility and competitive speed compared to both traditional solvers and other learning-based approaches like DC3 and PDL.

\end{document}